\newcommand\be{\begin{equation}}
\newcommand\ee{\end{equation}}
\newcommand\bea{\begin{eqnarray}}
\newcommand\eea{\end{eqnarray}}
\newcommand\bi{\begin{itemize}}
\newcommand\ei{\end{itemize}}
\newcommand\ben{\begin{enumerate}}
\newcommand\een{\end{enumerate}}
\newtheorem{thm}{Theorem}[section]
\newtheorem{lem}[thm]{Lemma}
\newtheorem{prop}[thm]{Proposition}
\newtheorem{defi}[thm]{Definition}
\newtheorem{rek}[thm]{Remark}
\newcommand{\N}{\mathbb{N}}
\numberwithin{equation}{section}
\begin{document}
	
\baselineskip=17pt

\title{On Near Perfect Numbers}

\author{Peter Cohen\\ Department of Mathematics \\ Massachusetts Institute of Technology \\ Cambridge, MA 02139, USA \\ E-mail: petercohen33@gmail.com
	\and Katherine Cordwell \\  Department of Computer Science \\ Carnegie Mellon University\\ Pittsburgh,  PA 15213 \\ E-mail: kcordwel@cs.cmu.edu
	\and Alyssa Epstein \\  Stanford Law School \\ Stanford, CA 94305 \\ E-mail: wtgalyssa@gmail.com
	\and  Chung-Hang Kwan\\Department of Mathematics \\ Columbia University in the City of New York\\ New York, NY 10027 \\E-mail: ck2854@math.columbia.edu
	\and Adam Lott \\ Department of Mathematics\\  University of California, Los Angeles\\ Los Angeles, CA 90095 \\ E-mail:  adamlott99@math.ucla.edu
	\and Steven J. Miller\\ Department of Mathematics and Statistics\\ Williams College \\ Williamstown, MA 01267\\ E-mail: Steven.Miller.MC.96@aya.yale.edu }

\date{}

\maketitle

\begin{comment}

\author[P. Cohen]{Peter Cohen}
\email{\textcolor{blue}{\href{mailto:petercohen33@gmail.com}{petercohen33@gmail.com}}} \address{Department of Mathematics, MIT, Cambridge, MA 02139}

\author[K. Cordwell]{Katherine Cordwell}
\email{\textcolor{blue}{\href{mailto:kcordwel@cs.cmu.edu}{kcordwel@cs.cmu.edu}}} \address{Department of Computer Science, Carnegie Mellon University, PA 15213  }

\author[A. Epstein]{Alyssa Epstein}
\email{\textcolor{blue}{\href{mailto:wtgalyssa@gmail.com }{wtgalyssa@gmail.com}}} \address{Stanford Law School, CA 94305}

\author[C. H. Kwan]{Chung Hang Kwan}
\email{\textcolor{blue}{\href{mailto:ck2854@math.columbia.edu}{ck2854@math.columbia.edu}}} \address{Department of Mathematics, Columbia University in the City of New York, NY 10027}

\author[A. Lott]{Adam Lott}
\email{\textcolor{blue}{\href{mailto:damlott99@math.ucla.edu}{adamlott99@math.ucla.edu}}} \address{Department of Mathematics, University of California, Los Angeles, CA 90095}

\author[S. J. Miller]{Steven J. Miller}
\email{\textcolor{blue}{\href{mailto:sjm1@williams.edu}{sjm1@williams.edu}}, \textcolor{blue}{\href{mailto:Steven.Miller.MC.96@aya.yale.edu}{Steven.Miller.MC.96@aya.yale.edu}}} \address{Department of Mathematics and Statistics, Williams College, Williamstown, MA 01267}
\end{comment}

%% Classification and key words; note that the 2010 classification is used:

\renewcommand{\thefootnote}{}

\footnote{2010 \emph{Mathematics Subject Classification}: Primary 11A25; Secondary 11N25, 11B83.}

\footnote{\emph{Key words and phrases}: Perfect numbers,  Near-perfect numbers, Pseudoperfect numbers, Arithmetic functions, Sum of divisors function.}

\renewcommand{\thefootnote}{\arabic{footnote}}
\setcounter{footnote}{0}

%%%%%%%%

\begin{abstract} The study of perfect numbers (numbers which equal the sum of their proper divisors) goes back to antiquity, and is responsible for some of the oldest and most popular conjectures in number theory. We investigate a generalization introduced by Pollack and Shevelev: $k$-near-perfect numbers. These are examples to the well-known pseudoperfect numbers first defined by Sierpi\'nski, and are numbers such that the sum of all but at most $k$ of its proper divisors equals the number. We establish their asymptotic order for all integers $k\ge 4$, as well as some properties of related quantities.
\end{abstract}

%%%%%%%%%%%%%%%%%%%%%%%%%%%%%%%%%%%%%%%%%%%%%%%%%%%%%%%%%%%%%%%%%%%%%%%%%%%%%%%%%%%%%%%%%%%%%%%%%%%%%%%%%%%%%%%%%%%%%%%%%%%%%%%%%
%%%%%%%%%%%%%%%%%%%%%%%%%%%%%%%%%%%%%%%%%%%%%%%%%%%%%%%%%%%%%%%%%%%%%%%%%%%%%%%%%%%%%%%%%%%%%%%%%%%%%%%%%%%%%%%%%%%%%%%%%%%%%%%%%
%%%%%%%%%%%%%%%%%%%%%%%%%%%%%%%%%%%%%%%%%%%%%%%%%%%%%%%%%%%%%%%%%%%%%%%%%%%%%%%%%%%%%%%%%%%%%%%%%%%%%%%%%%%%%%%%%%%%%%%%%%%%%%%%%

\section{Introduction}

%%%%%%%%%%%%%%%%%%%%%%%%%%%%%%%%%%%%%%%%%%%%%%%%%%%%%%%%%%%%%%%
%%%%%%%%%%%%%%%%%%%%%%%%%%%%%%%%%%%%%%%%%%%%%%%%%%%%%%%%%%%%%%%
%%%%%%%%%%%%%%%%%%%%%%%%%%%%%%%%%%%%%%%%%%%%%%%%%%%%%%%%%%%%%%%
Let $\sigma(n)$ be the sum of all positive divisors of $n$. A natural number $n$ is \emph{perfect} if $\sigma(n)=2n$. Perfect numbers have played a prominent role in classical number theory for millennia. A well-known conjecture claims that there are infinitely many even, but no odd, perfect numbers. Despite the fact that these conjectures remain unproven, there has been significant progress on studying the distribution of perfect numbers \cite{Vo, HoWi, Ka, Er1}, as well as generalizations. One are the \textit{pseudoperfect numbers}, which were introduced by Sierpi\'nski \cite{Si}.  A natural number is pseudoperfect if it is a sum of some subset of its proper divisors. Erd\"{o}s and Benkoski \cite{Er2, BeEr} proved that the asymptotic density for pseudoperfect numbers, as well as that of abundant numbers that are not pseudoperfect (also called \textit{weird numbers}), exist and are positive.

Pollack and Shevelev \cite{PoSh} initiated the study of a subclass of pseudoperfect numbers called \textit{near-perfect numbers}. A natural number is  $k$-near-perfect if it is a sum of all of its proper divisors with \emph{at most} $k$ exceptions.  Restriction on the number of exceptional divisors leads to asymptotic density 0. The number of $1$-near-perfect numbers up to $x$ is \footnote{ This is a result stated in \cite{AnPoPo}. In the original paper of Pollack and Shevelev \cite{PoSh}, the upper bound was given by $x^{5/6+o(1)}$.} at most $x^{3/4+o(1)}$, and in general for $k\ge 1$ the number of $k$-near-perfect numbers up to $x$ is at most $x(\log\log x)^{k-1} / \log x$.

Our first result improves the count of $k$-near-perfect numbers.

\begin{thm}\label{beat}
For any non-negative integer $k$ and real number $x\ge 1$, 	denote by $N(k;x)$ the set of $k$-near-perfect numbers up to $x$.

For any  $k\ge 4$,  there exists a constant $x_{0}(k)>0$ such that for $x\ge x_{0}(k)$, we have
\begin{equation}\label{eqn wild}
\#N(k;x)\ \asymp_{k}\  \frac{x}{\log x} (\log\log x)^{\left\lfloor\frac{\log(k+4)}{\log 2}\right\rfloor-3}.
\end{equation}
\end{thm}

Our argument is based on a partition of the set $N(k;x)$ different from that of \cite{PoSh} and this is described in Section \ref{discuss}.  This allows us to carry out an inductive argument and  reduces the count of $\#N(k;x)$ for large integers $k$ to the determination of all $k$-near-perfect numbers for small integers $k$ with a fixed number of positive divisors (see Lemma \ref{helpful4}). When $4\le k\le 11$, this even allows precise asymptotic formulae.

\begin{thm}\label{small}
For $4\le k\le 11$, there exists a constant $c_{k}>0$ such that
\begin{equation}
\#N(k;x) \ \sim \ c_{k} \frac{x}{\log x}
\end{equation}
as $x\to\infty$.
\end{thm}

Indeed, the computation of the constant $c_{k}$  follows from Lemma \ref{helpful4} and
\begin{align*}
c_{4} \ &=\  c_{5} \ \approx \ 0.1667, \ c_{6} \ \approx \ 0.2024,\\
\ c_{7} \ &= \ c_{8} \ \approx \ 0.3913, \ c_{9} \ \approx \ 0.4968, \ c_{10} \ \approx \ 0.5709, \ c_{11} \ \approx \ 0.6274.
\end{align*}

Our last result is motivated by an open question raised in \cite{BeEr}: can $\sigma(n)/n$  be arbitrarily large when $n$ is a weird number? \footnote{A number is \emph{weird} if the sum of its proper divisors is greater than itself, but no subset of these divisors sums to the original number.} We replace `weirdness' by `exact-perfectness', where a natural number is \textit{$k$-exact-perfect} if it is a sum of all of its proper divisors with \textit{exactly} $k$ exceptions. Note the result below is conditional on there being no odd perfect numbers.

\begin{thm}\label{thm: Intersections}
Let $\epsilon\in(0,2/5)$. Denote by $E(k)$ the set of all $k$-exact-perfect numbers,  $E(k;x):=E(k)\cap[1,x]$ and $E_{\epsilon}(k;x):=\{n\le x: n\in E(k), \  \sigma(n)\ge 2n+n^{\epsilon}\}$. Let $M$ be the set of all natural numbers of the form $2q$, where $q$ is a Mersenne prime\footnote{ Mersenne primes are primes of the form $2^p-1$ for some prime $p$.}. If there are no odd perfect number, then for $k$ sufficiently large and $k\not\in M$, we have
\begin{equation}\label{eqn Erdos}
    \lim_{x\to\infty} \frac{\#E_{\epsilon}(k;x)}{\#E(k;x)}\ = \ 1.
\end{equation}
\end{thm}

%%%%%%%%%%%%%%%%%%%%%%%%%%%%%%%%%%%%%%%%%
%%%%%%%%%%%%%%%%%%%%%%%%%%%%%%%%%%%%%%%%%
%%%%%%%%%%%%%%%%%%%%%%%%%%%%%%%%%%%%%%%%%

\subsection{Outline} In Section \ref{prep} we introduce the necessary definitions and lemmata for our theorems.  In Section \ref{discuss}, we set the stage for proving Theorem \ref{beat} and \ref{small}. In Section  \ref{proofsmall}, \ref{proofbeat} and \ref{proofint}, we prove Theorem  \ref{small}, \ref{beat} and \ref{thm: Intersections} and respectively.

%%%%%%%%%%%%%%%%%%%%%%%%%%%%%%%%%%%%%%%%%
%%%%%%%%%%%%%%%%%%%%%%%%%%%%%%%%%%%%%%%%%
%%%%%%%%%%%%%%%%%%%%%%%%%%%%%%%%%%%%%%%%%
\subsection{Notations}

We use the following notations and definitions.

\begin{itemize}
    \item We write $f(x) \asymp g(x)$ if there exist positive constants $c_1, c_2$ such that $c_1g(x) < f(x) < c_2g(x)$ for all sufficiently large $x$.

    \item We write $f(x) \sim g(x)$ if $\lim_{x \to \infty} f(x)/g(x) = 1$.

    \item We write $f(x) = O(g(x))$ or $f(x) \ll g(x)$ if there exists a positive constant $C$ such that $f(x) < Cg(x)$ for all sufficiently large $x$.

    \item We write $f(x) = o(g(x))$ if $\lim_{x \to \infty} f(x)/g(x) = 0$.

    \item In all cases, subscripts indicate dependence of implied constants on other parameters.

    \item Let $x\ge y\ge 2$. Denote by $\Phi(x,y)$ the set of all $y$-smooth numbers up to $x$ and  $\Phi_{j}(x,y)\ := \ \{n\le x: n=p_{1}\cdots p_{j}m_{j}, P^{+}(m_{j})\le y <p_{j}<\cdots<p_{1}\}$.

    \item We use $p$ and $p_{i}$ to denote primes, and $P^+(n)$ to denote the largest prime factor of $n$.

    \item Denote by $\tau(n)$ the number of positive divisors of $n$.

    \item Denote by $\Omega(n)$ the number of prime divisors of $n$ counting multiplicities.

    \item Denote by $N(k)$ the set of all $k$-near-perfect numbers and $N(k;x):=N(k)\cap [1,x]$.

    \item Denote by $E(k)$ the set of all $k$-exact-perfect numbers and $E(k;x):=E(k)\cap [1,x]$.

\end{itemize}

%%%%%%%%%%%%%%%%%%%%%%%%%%%%%%%%%%%%%%%%%
%%%%%%%%%%%%%%%%%%%%%%%%%%%%%%%%%%%%%%%%%
%%%%%%%%%%%%%%%%%%%%%%%%%%%%%%%%%%%%%%%%%

%%%%%%%%%%%%%%%%%%%%%%%%%%%%%%%%%%%%%%%%%%%%%%%%%%%%%%%%%%%%%%%%%%%%%%%%%%%%%%%%%%%%%%%%%%%%%%%%%%%%%%%%%%%%%%%%%%%%%%%%%%%%%%%%%
%%%%%%%%%%%%%%%%%%%%%%%%%%%%%%%%%%%%%%%%%%%%%%%%%%%%%%%%%%%%%%%%%%%%%%%%%%%%%%%%%%%%%%%%%%%%%%%%%%%%%%%%%%%%%%%%%%%%%%%%%%%%%%%%%
%%%%%%%%%%%%%%%%%%%%%%%%%%%%%%%%%%%%%%%%%%%%%%%%%%%%%%%%%%%%%%%%%%%%%%%%%%%%%%%%%%%%%%%%%%%%%%%%%%%%%%%%%%%%%%%%%%%%%%%%%%%%%%%%%

\section{Preparations}\label{prep}

In this section, we collect the necessary lemmata for our theorems. We begin with a well-known result of Landau regarding the arithmetic function  $\Omega(n)$,  the number of prime factors of $n$ \textit{counting multiplicities}, i.e., if $n=p_{1}^{a_{1}}\cdots p_{r}^{a_{r}}$, then $\Omega(n)=a_{1}+\cdots +a_{r}$.  We let
\begin{equation}
\Omega(s;x) \ := \ \{n\le x: \Omega(n)= s\}.
\end{equation}

\begin{lem}\label{lan}
	Fix an integer $s\ge 1$. As $x\to\infty$, we have
	\begin{equation}\label{eqn landau}
	\#\Omega(s;x)\ \sim\  	\#\{n\le x: n=p_{1}\cdots p_{s}, p_{1}>\cdots>p_{s}\} \ \sim \ \frac{1}{(s-1)!}\frac{x}{\log x}(\log\log x)^{s-1}.
	\end{equation}
\end{lem}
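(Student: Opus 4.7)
The plan is to induct on $r\ge 1$. The base case $r=1$ is the Prime Number Theorem, since $\Omega(n)=1$ iff $n$ is prime, giving $\#\Omega(1;x)=\pi(x)\sim x/\log x$.

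For the inductive step, complete additivity of $\Omega$ yields (by counting $\sum_{n\le x,\,\Omega(n)=r}\Omega(n)$ in two ways, using $\Omega(n)=\sum_{p^a\mid n,\,a\ge 1}1$ and the decomposition $n=p^am$ with $\Omega(p^am)=a+\Omega(m)$) the identity
$$r\cdot\#\Omega(r;x) \ = \ \sum_{a\ge 1}\sum_{p\,:\,p^a\le x} \#\Omega(r-a;\,x/p^a).$$
The contribution from $a\ge 2$ is of strictly lower order than the target, by the inductive bound for $\#\Omega(r-a;\cdot)$ together with convergence of $\sum_p p^{-a}$ for $a\ge 2$. So the task reduces to the asymptotic evaluation of $S(x):=\sum_{p\le x}\#\Omega(r-1;\,x/p)$.

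Substituting the inductive formula $\#\Omega(r-1;y)\sim y(\log\log y)^{r-2}/((r-2)!\log y)$ and approximating the prime sum by $\int_2^x f(t)\,dt/\log t$ via PNT, then changing variables $u=x/t$, $v=\log u$, $w=v/\log x$, gives
$$S(x) \ \sim\  \frac{x}{(r-2)!\,\log x}\int_{\log 2/\log x}^{1-\log 2/\log x}\frac{(\log w+L)^{r-2}}{w(1-w)}\,dw,$$
where $L=\log\log x$. Splitting $1/(w(1-w))=1/w+1/(1-w)$: the $1/w$ piece, after the substitution $y=\log w+L$, evaluates to $\sim L^{r-1}/(r-1)$; the $1/(1-w)$ piece, after $w'=1-w$ and Taylor expansion $(\log(1-w')+L)^{r-2}=L^{r-2}+O(L^{r-3})$ in the bulk range $w'\in[\log 2/\log x,\,1/2]$, contributes $\sim L^{r-2}\log(\log x/(2\log 2))\sim L^{r-1}$; the remaining range $w'\in(1/2,\,1-\log 2/\log x)$ contributes only $O(L^{r-2})$, via the substitution $y=\log(1-w')+L$ and the exponential decay of the resulting integrand. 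Adding,
$$S(x)\ \sim\ \frac{x\,L^{r-1}}{(r-2)!\,\log x}\cdot\left(\frac{1}{r-1}+1\right)\ =\ \frac{r\cdot x(\log\log x)^{r-1}}{(r-1)!\,\log x},$$
and dividing by $r$ yields the claim.

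The main subtlety is that \emph{both} halves of the integral contribute at leading order, with coefficients $1/(r-1)$ and $1$ respectively; extracting the exact factor $r/(r-1)$ that turns $1/(r-2)!$ into $1/(r-1)!$ requires carefully tracking the $1/(1-w)$ piece rather than discarding it as a ``tail.'' A secondary technical point is the use of the inductive asymptotic for $\#\Omega(r-1;x/p)$ when $p$ is close to $x$ so that $x/p$ is bounded: for $p\ge x/C$ with any fixed $C$ there are $O(x/\log x)$ primes and each $\#\Omega(r-1;x/p)=O_C(1)$, so this range contributes only $O(x/\log x)$ and is absorbed into the error.
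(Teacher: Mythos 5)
Your argument is correct in substance, but there is no internal proof to compare it with: the paper states this lemma as a classical result of Landau (citing Hardy--Wright, Hardy--Ramanujan, and Chapter III.3 of Tenenbaum) and gives no proof, so your derivation is necessarily an independent one rather than a variant of the paper's. Your induction via the completely additive identity $r\,\#\Omega(r;x)=\sum_{a\ge1}\sum_{p^a\le x}\#\Omega(r-a;x/p^a)$ is essentially Landau's original route, and you get the genuinely delicate part right: both partial-fraction pieces $1/w$ and $1/(1-w)$ (equivalently, both the range of small $p$ and the range of $p$ near $x$) contribute at leading order, and the factor $\tfrac{1}{r-1}+1=\tfrac{r}{r-1}$ is exactly what turns $1/(r-2)!$ into $r/(r-1)!$ before dividing by $r$; you also correctly isolate the uniformity issue of the inductive asymptotic when $x/p$ stays bounded. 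Two details would need spelling out in a complete write-up: (i) for the $a\ge2$ terms, convergence of $\sum_p p^{-a}$ combined only with the trivial bound gives $O(x)$, which is too large, so you need the inductive bound's $1/\log(x/p^a)$ saving on the range $p^a\le\sqrt{x}$, the trivial estimate $\#\Omega(r-a;x/p^a)\le x/p^a$ on prime powers $p^a>\sqrt{x}$ (contributing $O(x^{3/4})$), and the $a=r$ term $\pi(x^{1/r})$ handled separately; (ii) the replacement of $\sum_{p\le x}f(p)$ by $\int_2^x f(t)\,dt/\log t$ should be justified by partial summation against $\pi(t)$ with the prime number theorem's error term (or Mertens-type estimates), which is routine for your $f$. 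With those additions the proof is complete, and it is a legitimate self-contained alternative to simply quoting the literature as the paper does.
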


\begin{proof}
See \cite{HaWr} Theorem 437 (Section 22.18).
\end{proof}

Next, we state an elementary estimate of the number of  $y$-smooth numbers up to $x$. \footnote{ Let $y\ge 2$. A natural number $n$ is said to be $y$-smooth if all of its prime factors are at most $y$. }

\begin{lem}\label{psmooth}
Let
\begin{equation}
    u \ := \ \frac{\log x}{\log y}
\end{equation}
and  $\Phi(x,y)$ be the set of  $y$-smooth numbers up to $x$. Then uniformly for $x\ge y \ge 2$, we have
\begin{equation}\label{eqn smoothrough}
\#\Phi(x,y)\ \ll\ x\exp(-u/2),
\end{equation}

\end{lem}

\begin{proof}
See Theorem 9.5 of  \cite{DeKLu}.	
	
\end{proof}
	
Our next lemma is a standard result from sieve theory.
	
	\begin{lem}\label{sieve}
		Suppose $A$ is a finite set of natural numbers, $P$ is a set of primes, $z>0$ and $P(z)$ is the product of primes in $P$ not greater than $z$. Let
		\begin{equation*}
		S(A,P,z) \ := \  \{n\in A: (n,P(z))=1\}
		\end{equation*}
		and
		\begin{equation*}
		A_{d} \ := \ \{a\in A: d\ | \ a\} .
		\end{equation*}
		
		Assume the following conditions.
		
		\begin{enumerate}
			\item Suppose $g$ is a multiplicative function  satisfying
			\begin{equation*}
			0\ \le\ g(p) \ < \ 1\  \text{ for } \ p\in P \ \text{ and }\ g(p)=0 \ \text{ for } \  p\not\in P,
			\end{equation*}
			and there exists constants $B>0$ and $\kappa\ge 0$ such that
			\begin{equation*}
			\prod_{y\le p\le w} (1-g(p))^{-1}\ \le\ \left(\frac{\log w}{\log y}\right)^{\kappa} \exp\left(\frac{B}{\log y}\right)
			\end{equation*}
			for $2\le y<w$.
			
			\item Let $X>0$. For any square-free number $d$ with all of its prime factors in $P$, define
			\begin{equation*}
			r_{d}\ := \ \#A_{d}-Xg(d).
			\end{equation*}
			Assume that $r_{d}$ satisfies the inequality
			\begin{equation*}
			\sum_{\substack{d|P(z)\\ d\le X^{\theta}}} |r_{d}|\ \le\ C\frac{x}{(\log x)^{\kappa}}.
			\end{equation*}
			for some $\theta>0$.
			\end{enumerate}

		Then for $2\le z\le X$, we have
		\begin{equation}
		\#S(A,P,z)\ \ll_{\kappa, \theta, C, B}\ XV(z),
		\end{equation}
		where
		\begin{equation}
		V(z)\ := \ \prod_{\substack{p\le z\\ p\in P}}(1-g(p)).
		\end{equation}
	\end{lem}
	
	\begin{proof}
	For example, see  \cite{FoHa}.
	\end{proof}

In the proof of Theorem \ref{beat}, an estimate is needed for the size of the set
\begin{equation}
    \Phi_{j}(x,y)\ := \ \{n\le x: n=p_{1}\cdots p_{j}m_{j}, P^{+}(m_{j})\le y <p_{j}<\cdots<p_{1}\},
\end{equation}
where $j\ge 1$, $x\ge y\ge 2$ and $P^{+}(m_{j})$ denotes the largest prime factor of $m_{j}$. It follows from Lemmas \ref{psmooth} and \ref{sieve}.

\begin{lem}\label{jsmooth}
Suppose $j\ge 1$, $x\ge y\ge e$, and $y\le x^{\frac{1}{4(j+1)\log\log x}}$. Then
\begin{equation}
    \#\Phi_{j}(x,y)\ \ll_{j}\ \frac{x\log y}{\log x}(\log\log x)^{j-1}.
\end{equation}
\end{lem}

\begin{proof}
Introduce the following sets:
\begin{align}
Q^{(0)}(x,y) \ &:= \ \{n\le x: n=p_{1}\cdots p_{j}m_{j}, P^{+}(m_{j})\le y<p_{j}<\cdots<p_{1}\le x^{\frac{1}{j+1}}\}\\
Q^{(j)}(x,y)\ & := \  \{n\le x: n=p_{1}\cdots p_{j}m_{j}, P^{+}(m_{j})\le y<x^{\frac{1}{j+1}}<p_{j}<\cdots<p_{1}\},
\end{align}	
and for $1\le i\le j-1$,
\begin{align}
Q^{(i)}(x,y) \ := \ &\{n\le x: n=p_{1}\cdots p_{j}m_{j}, \nonumber\\
 & P^{+}(m_{j})\le y<p_{j}<\cdots<p_{i+1}\le x^{\frac{1}{j+1}}<p_{i}<\cdots<p_{1}\}.
\end{align}

Clearly, we have
\begin{equation}\label{eqn sumphi}
\#\Phi_{j}(x,y) \ = \ \sum_{0\le i \le j} Q^{(i)}(x,y).
\end{equation}

\begin{enumerate}
	
	\item By \eqref{eqn smoothrough} of Lemma \ref{psmooth},  we estimate $\#Q^{(0)}(x,y)$ as follows:
	
	 \begin{align}
	\#Q^{(0)}(x,y )&\ =\ \sum_{y<p_{j}<\cdots<p_{1}\le x^{\frac{1}{j+1}}} \sum_{\substack{P^{+}(m_{j})\le y\\ m_{j}\le x/(p_{1}\cdots p_{j})}} 1\nonumber\\
	&\ \ll\ \sum_{y<p_{j}<\cdots<p_{1}\le x^{\frac{1}{j+1}}} \frac{x}{p_{1}\cdots p_{j}}\exp\left(-\frac{\log(x/p_{1}\cdots p_{j})}{2\log y}\right)\nonumber\\
	&\ \le\ \sum_{y<p_{j}<\cdots<p_{1}\le x^{\frac{1}{j+1}}} \frac{x}{p_{1}\cdots p_{j}} \exp\left(-\frac{1}{2(j+1)}\frac{\log x}{\log y}\right)\nonumber\\
	&\ \le\ x\exp\left(-\frac{1}{2(j+1)}\frac{\log x}{\log y}\right) \left(\sum_{p\le x^{\frac{1}{j+1}}} \frac{1}{p}\right)^{j}\nonumber\\
	&\ \ll_{j}\ x(\log\log x)^{j}\exp\left(-\frac{1}{2(j+1)}\frac{\log x}{\log y}\right) \label{Q0}
	\end{align}

We have to make sure that (\ref{Q0}) is of acceptable size. Indeed, since
	\begin{equation*}
	y \ \le \  x^{\frac{1}{4(j+1)\log\log  x}},
	\end{equation*}
	it follows that
	\begin{align*}
	\frac{\log x}{\log y} \log \log x \ &= \ \exp(\log\log x+\log\log\log x-\log \log  y)\\
	 \ &\le \ \exp\left(\frac{4(j+1)(\log y)(\log\log x)}{2(j+1)(\log y)}\right)\\
	  \ &\le \ \exp\left(\frac{\log x}{2(j+1)(\log y)}\right)
	\end{align*}
	and
	\begin{equation*}
	x(\log\log x)^{j} \exp\left(-\frac{1}{2(j+1)}\frac{\log x}{\log y}\right) \ \le \ \frac{x\log y}{\log x} (\log\log x)^{j-1}.
	\end{equation*}

	\item In order to estimate $\#Q^{(j)}(x,y)$, we apply Lemma \ref{sieve} with  $A$  being the set of all natural numbers up to $x$, $P$ being the set of primes in $(y,x^{1/(j+1)}]$, $z:=x^{1/(j+1)}$, $X:=x$ and $g(d):=1/d$.  Then $S(A,P,z)$ is the set of all natural numbers up to $x$ whose prime factors are at most $y$ or at least $x^{\frac{1}{j+1}}$. (Note that there are at most $j$ prime factors can be larger than $x^{\frac{1}{j+1}}$.)

	By Mertens' estimates, we can see that all of the assumptions of Lemma \ref{sieve} are satisfied with $\kappa=B=C=1$ and $\theta=1/2$.   Hence, we have
	\begin{equation}\label{eqn Qj}
	\# Q^{(j)}(x) \ \le \ \#S(A, P, z) \ \ll \  \frac{x\log y}{\log x}.
	\end{equation}

	\item For $1\le i \le j-1$, we estimate  $\#Q^{(i)}(x,y)$ also by using Lemma \ref{sieve}. 	For any choices of primes $p_{i+1},\ldots p_{j}$ such that $y<p_{j}<\cdots<p_{i+1}\le x^{\frac{1}{j+1}}$, we choose
	\begin{equation*}
	X\ := \ \frac{x}{p_{i+1}\cdots p_{j}},
	\end{equation*}
	$A$ being the set of all natural numbers up to $X$, $P$ being the set of primes in $(y,x^{1/(j+1)}]$, $z:=x^{1/(j+1)}$ and $g(d):=1/d$. Hence,
	\begin{align}
	\#Q^{(i)}(x,y)&\ =\ \sum_{y<p_{j}<\cdots<p_{i+1}\le x^{\frac{1}{j+1}}} \sum_{\substack{P^{+}(m_{j})\le y \\ p_{1}>\cdots> p_{i}>x^{\frac{1}{j+1}}\\ p_{1}\cdots p_{i}m_{j}\le x/(p_{i+1}\cdots p_{j})}} 1 \nonumber\\
	&\ \ll\ \sum_{y<p_{j}<\cdots<p_{i+1}\le x^{\frac{1}{j+1}}} \frac{x}{p_{i+1}\cdots p_{j}} \frac{\log y}{\log x}\nonumber\\
	&\ \le\ \frac{x\log y}{\log x} \left(\sum_{p\le x^{\frac{1}{j+1}}} \frac{1}{p}\right)^{j-i}\ll_{j} \frac{x\log y}{\log x} (\log\log x)^{j-i} \label{Qi},
	\end{align}

	\end{enumerate}

The result now follows from \eqref{eqn sumphi}, (\ref{Q0}), \eqref{eqn Qj} and  (\ref{Qi}).
\end{proof}

\begin{rek}
Since
\begin{equation}
    \{n\le x: n=p_{1}\cdots p_{j}m_{j}, \ m_{j}\le y<p_{j}<\cdots<p_{1}\}\ \subset\ \Phi_{j}(x,y),
\end{equation}
it follows from Lemma \ref{lan} that
\begin{align}
\#\Phi_{j}(x,y)&\ \ge\ \sum_{m_{j}\le y} \sum_{\substack{n_{j}\le \frac{x}{m_{j}}\\ n_{j}=p_{1}\cdots p_{j} \\ \text{{\rm for some} } p_{1}>\cdots p_{j}>y}} 1 \nonumber\\
&\ \gg\ \sum_{m_{j}\le y} \frac{x/m_{j}}{\log (x/m_{j})} \left(\log\log \frac{x}{m_{j}}\right)^{j-1}\nonumber\\
&\ \ge\ \frac{x}{\log x}\left(\log\log \frac{x}{y}\right)^{j-1} \sum_{m_{j}\le y} \frac{1}{m_{j}}\nonumber\\
&\ \gg\ \frac{x\log y}{\log x} \left(\log\log\frac{x}{y}\right)^{j-1}.
\end{align}
\end{rek}

%%%%%%%%%%%%%%%%%%%%%%%%%%%%%%%%%%%%%%%%%%%%%%%%%%%%%%%%%%%%%%%%%%%%%%%%%%%%%%%%%%%%%%%%%%%%%%%%%%%%%%%%%%%%%%%%%%%%%%%%%%55
%%%%%%%%%%%%%%%%%%%%%%%%%%%%%%%%%%%%%%%%%%%%%%%%%%%%%%%%%%%%%%%%%%%%%%%%%%%%%%%%%%%%%%%%%%%%%%%%%%%%%%%%%%%%%%%%%%%%%%%%%%55
%%%%%%%%%%%%%%%%%%%%%%%%%%%%%%%%%%%%%%%%%%%%%%%%%%%%%%%%%%%%%%%%%%%%%%%%%%%%%%%%%%%%%%%%%%%%%%%%%%%%%%%%%%%%%%%%%%%%%%%%%%55
%%%%%%%%%%%%%%%%%%%%%%%%%%%%%%%%%%%%%%%%%%%%%%%%%%%%%%%%%%%%%%%%%%%%%%%%%%%%%%%%%%%%%%%%%%%%%%%%%%%%%%%%%%%%%%%%%%%%%%%%%%55

Below we state some elementary observations about near-perfect numbers.

\begin{lem}\label{helpful1}
Prime powers cannot be $k$-near-perfect for any integer $k\ge 0$.
\end{lem}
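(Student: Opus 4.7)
The plan is to reduce the claim to the fact that no prime power is abundant, by first extracting an abundance condition from the definition of $k$-near-perfectness and then computing $\sigma(p^a)$ in closed form. The argument is short, so I would present it in two steps with essentially no obstacle beyond a careful unpacking of the definition.

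First I would observe that if $n$ is $k$-near-perfect, there is a set $S$ of proper divisors of $n$ with $n = \sum_{d\in S} d$ and $|D\setminus S|\le k$, where $D$ denotes the set of all proper divisors of $n$. Since $\sigma(n) - n = \sum_{d\in D} d$, this rearranges to
\begin{equation*}
\sigma(n) - 2n \ = \ \sum_{d\in D\setminus S} d \ \ge \ 0.
\end{equation*}
So every $k$-near-perfect number must be perfect or abundant, regardless of the choice of $k$.

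Second, for $n = p^a$ I would substitute the closed form $\sigma(p^a) = (p^{a+1}-1)/(p-1)$ to obtain
\begin{equation*}
\sigma(p^a) - 2p^a \ = \ \frac{p^a(2-p) - 1}{p-1}.
\end{equation*}
When $p = 2$ the numerator equals $-1$; when $p\ge 3$, one has $p^a(2-p) \le -p^a$, so the numerator is at most $-p^a - 1 < 0$. In every case $\sigma(p^a) < 2p^a$, so $p^a$ is deficient, contradicting the necessary condition from the first step.

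No step here poses a serious obstacle; the only place one must take care is in correctly interpreting that the redundant divisors are the ones \emph{omitted} from the sum, so that the near-perfect relation forces $\sigma(n)\ge 2n$. Once that is pinned down, the finite closed form for $\sigma$ on prime powers is all that is needed.
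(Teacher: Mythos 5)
Your proof is correct, but it takes a somewhat different route from the paper's. The paper writes the near-perfect relation for $m=q^{\ell}$ directly as $q^{\ell}=\sum_{a\in A}q^{a}$ with $A\subseteq\{0,\ldots,\ell-1\}$ and concludes at once from the uniqueness of the $q$-ary representation (equivalently, from $1+q+\cdots+q^{\ell-1}<q^{\ell}$) that no such subset can exist. You instead isolate the general principle that any $k$-near-perfect number must satisfy $\sigma(n)\ge 2n$, since the omitted (redundant) divisors contribute a nonnegative amount, and then check that every prime power is deficient via the closed form $\sigma(p^{a})=(p^{a+1}-1)/(p-1)$. Both arguments are complete and equally short; yours yields the slightly more general fact that no deficient number can be $k$-near-perfect for any $k$ (consistent with the paper's remark that pseudoperfect numbers are necessarily non-deficient), while the paper's base-$q$ argument is tailored to prime powers and avoids any computation with $\sigma$. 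Either way the lemma follows.
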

\begin{proof}
This follows directly from the definition of near-perfect numbers and the uniqueness of $q$-ary representation. \end{proof}

\begin{lem}[Euclid-Euler]\label{eulem}
All even perfect numbers are of the form $2^{p-1}(2^p-1)$, where $p$ is a Mersenne prime, i.e., a prime $p$ such that $2^p-1$ is also a prime. 	
\end{lem}

\begin{lem}\label{noddperf}
	An odd perfect number has at least 4 distinct prime factors.
\end{lem}

In fact, it is now known that an odd perfect number must have at least 10 distinct prime factors. This is due to Nielsen \cite{Niel}. The proof of an odd perfect has at least $4$ distinct prime factors is completely elementary.

The following lemma resembles the aforementioned theorem of Euclid-Euler and it serves as a  complete classification of $1$-near-perfect numbers with two distinct prime factors. This is helpful in reducing the number of cases to be considered in Lemma \ref{helpful4}.

\begin{lem}\label{helpful3}
	A $1$-near-perfect number which is not perfect and has two distinct prime factors is of the form
	\begin{enumerate}
		\item $2^{t-1}(2^t-2^k-1)$, where $2^t-2^k-1$ is prime,
		\item $2^{2p-1}(2^p-1)$, where $p$ is a Mersenne prime.
		\item $2^{p-1}(2^p-1)^2$, where $p$ is a Mersenne prime.
		\item $40$.
	\end{enumerate}
\end{lem}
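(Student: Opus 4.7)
My plan is to perform explicit case analysis on $m = p^a q^b$ with $p < q$ primes and $a, b \ge 1$, essentially following the argument of \cite{ReCh}. First I would show that $m$ must be even. Indeed, if both $p, q$ are odd (so $p \ge 3$, $q \ge 5$), then
$$\frac{\sigma(m)}{m} \ = \ \frac{p^{a+1}-1}{p^a(p-1)} \cdot \frac{q^{b+1}-1}{q^b(q-1)} \ < \ \frac{p}{p-1} \cdot \frac{q}{q-1} \ \le \ \frac{3}{2} \cdot \frac{5}{4} \ = \ \frac{15}{8} \ < \ 2,$$
which is incompatible with $\sigma(m) = 2m + d$ for some positive divisor $d$. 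Hence $p = 2$ and we write $m = 2^a q^b$ with $q$ an odd prime.

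Next, writing the proper divisor as $d = 2^i q^j$ with $(i,j) \in [0,a]_{\Z} \times [0,b]_{\Z} \setminus \{(a,b)\}$, the near-perfect equation $\sigma(m) = 2m + d$ rearranges to
$$(2^{a+1}-1)(q^{b+1}-1) \ = \ (q-1)\bigl(2^{a+1} q^b + 2^i q^j\bigr).$$
I would reduce this identity modulo $q$ and modulo powers of $2$ to constrain $(i, j)$. Reducing modulo $q$ (when $j \ge 1$) versus the case $j = 0$ separates the analysis, and a parallel analysis modulo $2^{a+1}-1$ constrains $i$. The intermediate cases $0 < j < b$ and $0 < i < a$ should be ruled out by comparing the $q$-adic and $2$-adic valuations of both sides, leaving only the boundary cases $j \in \{0, b\}$ and $i \in \{0, a\}$.

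For each surviving boundary case, the equation becomes a two-variable Diophantine equation in $a, b$ (with $q$ determined). The case $j = 0$, $i = a$ leads directly to $2^{a+1} - 1 - 2^a = q^b \cdot (\text{something})$, forcing $b = 1$ and $q = 2^{a+1} - 2^k - 1$ prime; this produces family (1) with $t = a+1$. The case where $m$ is itself close to a perfect number (with $q = 2^p - 1$ a Mersenne prime) gives families (2) and (3), and the sporadic small solution $m = 40 = 2^3 \cdot 5$ is found directly, after checking that no other small $(a,b)$ with $q$ moderately small yields a valid configuration.

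The main obstacle is bookkeeping: the case split over $(i, j)$ against the boundary cases $(0,0), (0,b), (a,0), (a,b-1), \ldots$ produces many subcases, and in each one has to verify either that no solution exists or that the solution falls into one of the four listed families. The critical arithmetic fact powering the Mersenne-prime families is that $\gcd(2^{a+1}-1, q) > 1$ forces $q \mid 2^{a+1}-1$, which pins down the relationship $q = 2^p - 1$ with $p \mid a+1$, and then a parity/size comparison forces $p = a+1$ or $a+1 = 2p$ in the remaining cases.
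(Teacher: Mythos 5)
The paper itself does not prove this lemma; it is quoted from Ren--Chen \cite{ReCh}, so your task was effectively to reconstruct that classification, and the sketch as written would not do so. Your opening reduction is fine: $m$ must be even, and writing $m=2^aq^b$, $d=2^iq^j$ gives $(2^{a+1}-1)(q^{b+1}-1)=(q-1)\bigl(2^{a+1}q^b+2^iq^j\bigr)$. But your central structural step --- that comparing $2$-adic and $q$-adic valuations eliminates the intermediate exponents $0<i<a$ and $0<j<b$, ``leaving only the boundary cases'' --- is false, and it is contradicted by the very families you are classifying. In family (1) the redundant divisor is $2^k$ with $k$ an arbitrary exponent in $[0,t-1]_{\Z}$: for instance $m=464=2^4\cdot 29$ (here $t=5$, $k=1$) satisfies $\sigma(m)=930=2m+2$, so $i=1$ lies strictly between $0$ and $a=4$. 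In family (2), $m=2^{2p-1}(2^p-1)$ has $\sigma(m)-2m=2^p(2^p-1)$, so $i=p$ with $a=2p-1$, again intermediate. In family (3), $m=2^{p-1}(2^p-1)^2$ has $\sigma(m)-2m=2^p-1$, so $j=1$ with $b=2$, intermediate in $j$. And $40$ has $\sigma(40)-80=10=2\cdot 5$, so $i=1$ with $a=3$. A case elimination that discards all configurations with intermediate exponents would ``prove'' a statement omitting most of the listed solutions, so the valuation argument you propose cannot exist in the form described.

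Beyond this, the completeness half of the classification --- the part that is actually hard in Ren--Chen's proof --- is only asserted: ruling out $b\ge 2$ outside family (3), pinning down the Mersenne cases to exactly $a+1=p$ or $a+1=2p$, and verifying that $40$ is the only sporadic solution are all covered by phrases like ``should be ruled out'' and ``is found directly, after checking,'' with no mechanism (no bound on $a,b,q$, no finite search region) that would make the check finite. What you do have right is the evenness of $m$, the displayed Diophantine identity, the observation that $j\ge 1$ forces $q\mid 2^{a+1}-1$ (which is indeed how the Mersenne-prime families enter), and the case $j=0$, $b=1$, which yields $q=2^{a+1}-2^i-1$, i.e.\ family (1) with $t=a+1$, $k=i$ --- though note this requires general $i$, not $i=a$ as you wrote (your own formula with a free $k$ is inconsistent with fixing $i=a$). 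As it stands the proposal is an outline of the right general shape with a key step that is wrong and the decisive eliminations missing.
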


\begin{proof}
	See \cite{ReCh}.
\end{proof}

Upon carrying out the recursive process as described in Section \ref{discuss} and \ref{proofbeat}, it boils down to prove the following lemma which can be done by explicit computation.

\begin{lem}\label{helpful4}
 Let $\tau(m)$ be the number of positive divisors of the positive integer $m$.
	\begin{enumerate}
		\item \label{pri}	If $\tau(m)$ is prime, then $m$ cannot be $k$-near-perfect for any integer $k\ge 0$.
		
			\item  \label{1near10}  Suppose $\tau(m)=10$.  Then
			\begin{enumerate}
				\item if $m$ is perfect, then $m=496$.
				
				\item if $m$ is $1$-near-perfect, then $m\in\{496, 368, 464\}$.
			
				\end{enumerate}

			\item  \label{2near9} Suppose $\tau(m)=9$.  Then
			\begin{enumerate}
				\item $m$ cannot be perfect.
				
				\item if $m$ is $1$-near-perfect, then $m=196$.
				
				\item if $m$ is $2$-near-perfect, then $m\in \{196, 36\}$.
				
			\end{enumerate}

		\item  \label{3near8} Suppose $\tau(m)=8$. Then
		\begin{enumerate}
			\item $m$ cannot be perfect.
			
			\item if $m$ is $1$-near-perfect, then $m\in \{24, 40, 56, 88, 104\}$.
			
			\item if $m$ is $2$-near-perfect, then $m\in \{24, 40, 56, 88, 104, 30, 54, 66\}$.
			
			\item if $m$ is $3$-near-perfect, then $m\in \{24, 40, 56, 88, 104, 30, 54, 66, 42\}$.
			
		\end{enumerate}

	\item \label{near6} Suppose $\tau(m)=6$. Then if $m$ is $k$-near-perfect for some $k\ge 0$, then  $m\in\{28, 12, 18, 20\}$.

		\item \label{near4} Suppose $\tau(m)=4$. Then  if $m$ is $k$-near-perfect for some $k\ge 0$, then $m=6$.

		\end{enumerate}
	
\end{lem}

\begin{proof}
	\
	\begin{enumerate}
		
		\item Follows immediately from  Lemma \ref{helpful1}.

		\item Suppose $m$ is a $1$-near-perfect and  $\tau(m)=10$.  Since $\tau(m)=10$, $m$ is of the form $q^9$ or $q^4 r$, where $q,r$ are distinct primes. The first case cannot happen by Lemma \ref{helpful1}.
		
		Now suppose the second case. If $m$ is perfect, by Lemma  \ref{noddperf}, it must be even. Then by Lemma \ref{eulem}, $m=q^4 r = 2^{p-1}(2^p-1)$ for some Mersenne prime $p$.   It follows that
		\begin{equation*}
		q \ = \ 2,\  p-1 \ = \ 4 \ \text{ and }\  r \ = \ 2^p-1,
		\end{equation*}
		i.e., $(q, r)=(2,31)$ and  $p=5$. Note that $q,r$ are distinct primes and $p$ is a Mersenne prime. Thus, we have $m=2^4\cdot 31 =496$.
		
		If  $m$ is $1$-near-perfect but not perfect, we use Lemma \ref{helpful3} instead and similarly, we have $m=2^4\cdot 23 = 368,\  2^4\cdot 29=464$.
		
		Thus, all the possible $m$'s are  $368, 464, 496$. \newline

		\item Suppose $m$ is a $2$-near-perfect and $\tau(m)=9$.  Since $\tau(m)=9$, $m$ is of the form $q^8$ or $q^2 r^2$, where $q,r$ are distinct primes. The first case cannot happen by Lemma \ref{helpful1}. If $m=q^2 r^2$ is  $1$-near-perfect, as before  by Lemma \ref{eulem}, \ref{noddperf} and \ref{helpful3}, the only possibility is $m \ = \ 2^2\cdot 7^2 \ = \ 196$.
		
	Now suppose $m=q^2 r^2$ is 2-near-perfect but not $1$-near-perfect. It suffices to consider the following $16$ equations by observing the symmetry of $q$ and $r$ in  $q^2 r^2$:
		
		\begin{align}
		&(1+q+q^2)(1+r+r^2)-2q^2 r^2 \nonumber\\
		\ =  \ &  1+q,\  1+q^2, \ 1+qr, \ 1+q^2 r, \ q+q^2,\  q+r,\  q^2+r, \ q+qr,  \ \ q+q^2 r, \nonumber\\
		&  r+q^2 r, \ q^2+r^2, \ q^2+q r, \ q^2+q^2 r, \ q^2+qr^2,\  qr+q^2r,\  q^2 r+qr^2 \label{92near}.
		\end{align}
		
	Given any $q, r\ge 2$, 	it is clear that  $1+q$ is the smallest among the $16$ expressions on the right side of (\ref{92near}) . We claim that if $q\ge 7$ and $r\ge 2$, then
		\begin{equation*}
		(1+q+q^2)(1+r+r^2)- 2q^2 r^2 \ < \ 1+q,
		\end{equation*}
		i.e.,
		\begin{equation*}
		f_{q}(r) \ :=\ (q^2-q-1)r^2-(1+q+q^2)r-q^2 \ >\  0.
		\end{equation*}
		This is simply a quadratic polynomial inequality in $r$. Note that $q^2-q-1>0$ and
		\begin{align*}
		\Delta(q) \ &:= \ (1+q+q^2)^2+ 4q^2(q^2-q-1) \ = \  5q^4-2q^3-q^2+2q+1 \\
		\ &> \ 0.
		\end{align*}
		Thus, if
		\begin{equation}\label{eqn range}
		r \  > \ \frac{(1+q+q^2)+\sqrt{\Delta(q)}}{2(q^2-q-1)}
		\end{equation}
		then
		\begin{equation*}
		f_{q}(r) \ > \ 0.
		\end{equation*}
		The inequality \eqref{eqn range} is satisfied with $q\ge 7$ and $r\ge 2$ since for $q\ge 7$, we have
		\begin{equation*}
		2 \ > \ \frac{(1+q+q^2)+\sqrt{\Delta(q)}}{2(q^2-q-1)}.
		\end{equation*}
		The claim follows.
		
		Thus by this claim, the left side of (\ref{92near}) is strictly less than each of the $16$  possibilities of the right side of (\ref{92near}) when $q\ge 7$ and $r\ge 2$. Now, it suffices to solve the $16$ equations in $r$ with $q=2,3,5$. The only solution is $(q,r)= (3,2)$ (i.e., $m=36$), which comes from the equation
		\begin{equation*}
		(1+q+q^2)(1+r+r^2) \ = \ 2q^2 r^2+ 1+q^2 r.
		\end{equation*}
		\newline
		
		\item Suppose $m$ is $3$-near-perfect and $\tau(m)=8$. Since $\tau(m)=8$, $m$ is of the form $q^7$, $q^3 r$ \ or\  $qrs$, where $q,r,s$ are distinct primes.  Once again by Lemma \ref{helpful1}, the first case is impossible.
		
		\begin{enumerate}
			\item Suppose $m=q^3 r$. As we have done in  (\ref{1near10}) and (\ref{2near9}), if $m$ is $1$-near-perfect, then
			\begin{align*}
			m \ = \ &2^3\cdot 7 \ = \ 56, \ 2^3\cdot 11 \ = \ 88, \ 2^3\cdot 13 \ = \ 104, \\
			 & 2^3\cdot 3 \ = \ 24, \ 2^3\cdot 5 \ = \ 40,
			\end{align*}
			by Lemma \ref{eulem}, \ref{noddperf} and \ref{helpful3}.
			
			Suppose $m=q^3 r$ is $3$-near-perfect but not $1$-near-perfect. Then it suffices to consider the  ${7\choose2}+{7\choose2} = 56$ equations formed by all of the possible pairs or triples distinct proper divisors. Following the steps in (\ref{2near9}), out of the sums of these pairs or triples, the smallest ones are $1+q$ or $1+r$.
			
			When $q\ge 5$ and $r\ge 2$, we have
			\begin{equation*}
			r  \ \ge \  2 \ > \ \frac{q^3+q^2}{q^3-q^2-q-1} \ \text{ and } \ r \ge \ 2 \ >\ \frac{q^2+q+1}{q^2-q-1}.
			\end{equation*}
			These imply that
			\begin{equation}\label{eqn ine23}
			(1+q+q^2+q^3)(1+r)- 2q^3 r \ < \ 1+q, \ 1+r.
			\end{equation}
			The same inequality is valid by replacing the right side of \eqref{eqn ine23} by the sum of any of the  $56$ possible pairs or triples of proper divisors of $m=q^3 r$, when $q\ge 5$ and $r\ge 2$.
			
			It remains to solve the $56$ equations in $r$ with $q=2, 3$. Only the following equations are solvable:
			\begin{enumerate}
				\item $(1+q+q^2+q^3)(1+r) -2q^3 r= 1+r$; $(q,r)=(2,7)$ (i.e., $m=56$),
				
				\item $(1+q+q^2+q^3)(1+r) -2q^3 r = q+q^2$; $(q,r)=(3,2)$ (i.e., $m=54$),
				
				\item $(1+q+q^2+q^3)(1+r) -2q^3 r= q+q^3$; $(q,r)=(2,5)$ (i.e., $m=40$),
				
				\item $(1+q+q^2+q^3)(1+r) -2q^3 r = q^2+q^3$; $(q,r)=(2,3)$ (i.e., $m=24$),
				
				\item $(1+q+q^2+q^3)(1+r) -2q^3 r =1+q^2+r$; $(q,r)=(2,5)$ (i.e., $m=40$), $(q,r)=(3,2)$ (i.e., $m=54$),
				
				\item $(1+q+q^2+q^3)(1+r) -2q^3 r =1+q^3+r$; $(q,r)=(2,3)$ (i.e., $m=24$),
				
				\item $(1+q+q^2+q^3)(1+r) -2q^3 r=q+q^2+rq$; $(q,r)=(2,3)$ (i.e., $m=24$)

				\end{enumerate}

			\item Suppose $m=qrs$. It cannot be perfect.   We shall use a similar strategy as above. By  symmetry, it suffices to solve the following $19$ equations one-by-one:
			\begin{align}
			&(1+q)(1+r)(1+s)- 2qrs \nonumber \\
			\ = \ & 1, \ q, \ qr, \ 1+q, \ 1+qr, \ q+r, \ q+qr, \ q+rs, \ qr+qs, \nonumber  \\
			& 1+q+r, \ 1+q+qr, \ 1+q+rs, \ 1+qr+qs, \ q+r+s, \nonumber \\
			& q+r+qr, \ q+qr+s, \ q+qr+rs, \ q+qr+qs, \ qr+rs+qs. \label{equa8qrs}
			\end{align}

			 We claim that
			\begin{equation*}
			(1+q)(1+r)(1+s)-2qrs \ < \ 1
			\end{equation*}
			for $q\ge 11$, $r\ge 5$ and $s\ge 2$.
			
			This can be verified as follows. Since $s\ge 2$, we have
			\begin{equation*}
			\frac{12s+11}{10s-2} \ < \ 5.
			\end{equation*}
			Then by $r\ge 5$, we have
			\begin{equation*}
			r \ > \ \frac{12s+11}{10s-2}.
			\end{equation*}
			This implies that
			\begin{equation*}
			11(rs-r-s-1) \ > \ rs+r+s.
			\end{equation*}
			
			By $rs-r-s-1>0$ and $q\ge 11$, we have
			\begin{equation*}
			q(rs-r-s-1)  \ > \   rs+r+s.
			\end{equation*}
			Now, the claim follows.

			Thus, it suffices to solve the equations with $q=2,3,5,7$ or  $r=2,3$. This reduces the $19$ three-variable equations in (\ref{equa8qrs}) to two-variable ones.

			Out of these equations, only the following equations are solvable:
			\begin{enumerate}
				\item $(1+q)(1+r)(1+s)-2qrs = 1+q$; $(q,r,s)=(11,2,3), (11, 3,2)$ ($m=66$),
				
				\item $(1+q)(1+r)(1+s)-2qrs = q+qr$; $(q,r,s)=(2,5,3)$ ($m=30$)
				
				\item $(1+q)(1+r)(1+s)-2qrs = 1+q+rs$; $(q,r,s)=(5,2,3), (5,3,2)$ ($m=30$)
				
				\item $(1+q)(1+r)(1+s)-2qrs = q+r+s$; $(q,r,s)=(2,3,7)$, $(2,7,3)$, $(3,2,7)$, $(3,7,2)$, $(7,2,3)$, $(7,3,2)$ (i.e., $m=42$)
				\end{enumerate}
			
		\end{enumerate}

	\item Suppose $m$ is a $k$-near-perfect number for some $k\ge 0$ and $\tau(m)=6$. Then $k\in \{0,1,2,3,4,5\}$ and  Lemma \ref{helpful1} implies that $m$ is of the form $q^2r$ with $q, r$ being distinct primes.
	
	When $k=0$, $m=28$. When $k=1$, by Lemma \ref{helpful3} we have $m\in \{12, 18,20\}$.  For $2\le k \le 5$, consider the following Diophantine equations:
	
	\begin{align*}
	\sigma(m)-2m \ = \ & (1+q+q^2)(1+r)-2q^2 r\\
	 \ \in \ &\{1+q,\  1+q^2,\  1+r, \ 1+qr,\  q+q^2,\  q+r,\  q+qr,\  q^2+r,\  q^2+qr,\\
	& r+qr, \hspace{10pt} (k=2)\\
	& 1+q+q^2,\   1+q+r,\  1+q+qr,\  1+q^2+r,\  1+q^2+qr,\  1+r+qr, \\
	&q+q^2+r,\  q+q^2+qr,\  q+r+qr,\  q^2+r+qr, \hspace{10pt} (k=3)\\
	& 1+q+q^2+r,\  1+q+q^2+qr,\ 1+q+r+qr,\  1+q^2+r+qr,\\
	& q+q^2+r+qr,  \hspace{10pt} (k=4) \\
	& 1+q+q^2+r+qr \hspace{10pt} (k=5) \}.
	\end{align*}
	
	We may express $r$ in terms of $q$ easily:
	
	\begin{align*}
	r \ = \ & 1+\frac{q+1}{q^2-q-1}, \ \frac{q}{q^2-q-1}, \ 1+\frac{2}{q-1}, \ 1+\frac{1}{q-1}, \ \frac{1}{q^2-q-1}, \ 1-\frac{q+1}{q^2+1}, \\ &1+\frac{2}{q^2-1}, \ \frac{q+1}{q^2-q}, \ \frac{1}{q-1}, 1+\frac{1+q}{q^2}  \hspace{10pt} (k=2),\\
	& 0, \ 1+\frac{1}{q-1}, \ 1+\frac{1}{q^2-1}, \ \frac{1}{q-1}, \ \frac{q}{q^2-1}, \ 1+\frac{1}{q}, \ \frac{1}{q^2-q}, \ \frac{1}{q^2-1}, \ 1+\frac{1}{q^2}, \  \frac{q+1}{q^2}, \\
	& (k=3),\\
	& 0, \ 0, \ 1, \ \frac{1}{q},  \ \frac{1}{q^2} \hspace{10pt} (k=4), \\
	& 0 \hspace{10pt} (k=5).
	\end{align*}

	The solvabilities of the equations are now apparent as only for small $q$'s the expressions are possibly integral. Also recall the restriction that $q,r$ have to be distinct primes. Thus,  only
	\begin{equation*}
	 (1+q+q^2)(1+r)-2q^2 r \ =  \  1+r
	\end{equation*}
	has solutions and $(q,r)= (2,3), (3,2)$, which correspond to $m= 12, 18$.

	\item  Suppose $m$ is a $k$-near-perfect number for some $k\ge 0$ and $\tau(m)=4$. Then $k\in \{0,1,2,3\}$ and  Lemma \ref{helpful1} implies that $m$ is of the form $qr$, where $q, r$ are distinct primes.  By also noting the symmetry of $q$ and $r$, it suffices to consider the following Diophantine equations:
	
	\begin{align*}
	\sigma(m)-2m \ &= \ (1+q)(1+r)-2qr \\
	 \ &\in \ \{0, 1, q, 1+q, q+r, 1+q+r\}.
	\end{align*}
	
	Simply expand the above equations,  we have
	\begin{align*}
	1+q+r \  = \ qr, \ 	r \ = \ q(r-1),  \ 1 \ = \ r(q-1),  \  r \ = \ 2qr, \  1 \ = \ qr, \ qr \ = \ 0
	\end{align*}
	respectively. Each of these equations are now straight-forward to solve and the only possible solution is $m=6$.

	\end{enumerate}
	
\end{proof}

The proof of Theorem \ref{thm: Intersections} rests on the study of  the equation $\sigma(n) = \ell n +k$ which is carried out by a number of authors in the past decades; for more detail, see \cite{AnPoPo,  Po1, Po2, Po3, PoPo, PoSh,  PoPoTh}.  In this article, we only need the case of \ $\ell =  2$ and adopt following definitions from the aforementioned literature.

\begin{defi}[Regular / Sporadic Solutions]\label{regular}
The solutions of \ $\sigma(n) = 2n +k$ \ of the form
\begin{equation}
    n=pm',  \text{ where }  \ p\nmid m',\  \sigma(m') \ = \ 2m',  \ \sigma(m') \ = \ k,
\end{equation}
are called \textit{regular}. All other solutions are called  \textit{sporadic}.
\end{defi}

\begin{lem}\label{spor}
Let $x\ge 3$ and $k$ be an integer. The number of sporadic solutions $n\le x$ to $\sigma(n)=2n+k$ is at most $x^{3/5+o(1)}$ as $x\to\infty$, uniformly in $k$.
\end{lem}

\begin{proof}
See \cite{PoPoTh} Theorem 4.4.
\end{proof}

%%%%%%%%%%%%%%%%%%%%%%%%%%%%%%%%%%%%%%%%%%%%%%%%%%%%%%%%%%%%%%%%%%%%%%%%%%%%%%%%%%%%%%%%%%%%%%%%%%%%%%%%%%%%%%%%%%%%%%%%%%%%%%%%%
%%%%%%%%%%%%%%%%%%%%%%%%%%%%%%%%%%%%%%%%%%%%%%%%%%%%%%%%%%%%%%%%%%%%%%%%%%%%%%%%%%%%%%%%%%%%%%%%%%%%%%%%%%%%%%%%%%%%%%%%%%%%%%%%%
%%%%%%%%%%%%%%%%%%%%%%%%%%%%%%%%%%%%%%%%%%%%%%%%%%%%%%%%%%%%%%%%%%%%%%%%%%%%%%%%%%%%%%%%%%%%%%%%%%%%%%%%%%%%%%%%%%%%%%%%%%%%%%%%%

\section{Outline of Theorem \ref{beat} and \ref{small}}\label{discuss}

Let us first recall the settings in \cite{PoSh}. In order to estimate the size of the set $N(k;x)$, one may partition it into the following three subsets and estimate each respectively:
\begin{align}
N_{1}(k;x) \ &:= \ \{n\in N(k;x): P^{+}(n)\le y\}, \nonumber\\
N_{2}(k;x) \ &:= \ \{n\in N(k;x) : P^{+}(n)>y \text{ and } P^{+}(n)^2|n \},  \nonumber\\
N_{3}(k;x) \ &:= \ \{n\in N(k;x): P^{+}(n)>y  \text{ and } P^{+}(n)\mid\mid n \},
\end{align}
where we shall remark on the choice of $y=y(x)$ at the end of this section.

In \cite{PoSh} they further partitioned $N_{3}(k;x)$ according to whether $\tau(m)$ is at most $k$ or not. They bounded the contribution from $\tau(m)\le k$ simply by $\frac{x}{\log x}(\log \log x)^{k-1}$, i.e., Lemma \ref{lan}. Instead, if one considers the normal order of $\log \tau(n)$, which is $(\log 2)\log \log n$, one obtains the bound $\frac{x}{\log x}(\log\log x)^{\lfloor \frac{\log k}{\log 2}\rfloor}$ for $N(k;x)$. More work is needed, though, as this is still not the correct order for $\#N(k;x)$; we thus have to partition $N_{3}(k;x)$ more carefully. This is explained as follows.

\begin{defi}\label{asso}
	Suppose  $n=pm$ with $p> P^{+}(m)$. For $k$-near-perfect number $n$, there exists a set of proper divisors $D_{n}$ of $n$ with $\#D_{n}\le k$ such that
	\begin{equation}\label{eqn sigmasum}
	\sigma(n) \ = \ 2n+\sum_{d\in D_{n}}d.
	\end{equation}
	We define the following associated sets:
	\begin{align}
	D_{n}^{(1)} \ &:= \ \{d\in D_{n}: p\nmid d \}, \nonumber \\
	D_{n}^{(2)} \ &:= \ \{d/p: d\in D_{n},\  p\mid d\}.  \label{dnsets}
	\end{align}
\end{defi}

 It is clear that $D_{n}^{(1)}$ and $D_{n}^{(2)}$ consists of positive divisors and proper divisors of $m$ respectively.

\begin{prop}\label{mnovel}
With the same settings in Definition \ref{asso}, $n$ is $k$-near-perfect and the set  $D_{n}^{(1)}$ consists of all positive divisors of $m$ if and only if  $\tau(m)\le k$ and $m\in N(k-\tau(m))$.
\end{prop}

\begin{proof}
$(\Rightarrow)$: 	Immediately from the assumptions,
\begin{align}
(1+p)\sigma(m) \ = \ \sigma(pm) \ &= \ 2pm+\sum_{d\in D_{n}^{(1)}}d+p\sum_{d\in D_{n}^{(2)}}d \nonumber\\
\ &= \ 2pm+ \sigma(m) + p\sum_{d\in D_{n}^{(2)}}d.  \label{sigma}
\end{align}
This implies
\begin{equation}\label{eqn tau}
\sigma(m)\ =\ 2m+\sum_{d\in D_{n}^{(2)}} d.\
\end{equation}
Since $\# D_{n}^{(1)}=\tau(m)$ and $\# D_{n}^{(1)}+\# D_{n}^{(2)}=\#D_{n}\le k$, we have $\tau(m)\le k$, $\# D_{n}^{(2)}\le k-\tau (m)$ and $m\in N(k-\tau(m))$.

$(\Leftarrow)$: There exists a set of proper divisors $D_{m}$ of $m$ with $\# D_{m}\le k-\tau(m)$ such that
\begin{equation*}
\sigma(m) \ = \  2m+ \sum_{d\in D_{m}} d.
\end{equation*}
Then
\begin{align}
\sigma(n) \ = \  (1+p) \sigma(m) \ = \ \sum_{d\mid m} d + p\left(2m+ \sum_{d\in D_{m}} d\right) \ = \ 2n + \sum_{d\mid m} d +p \sum_{d\in D_{m}} d.
\end{align}
Now,
\begin{equation*}
\left\{d\mid m \right\} \cup \left\{pd: d\in D_{m} \right\}
\end{equation*}
is a set of proper divisors of $n$ with at most $\tau(m)+(k-\tau(m))=k$ elements. Thus, $n$ is a $k$-near-perfect number. Also, $D_{n}^{(1)}=\{d\mid m\}$ and $D_{n}^{(2)}=D_{m}$.

\end{proof}

To facilitate discussion that follows, we introduce the following notations:

\begin{defi}
\begin{align}
    N_{3}^{(1)}(k;x) \ &:= \ \{n\le x: n=pm,\  p>\max\{y, P^{+}(m)\}, \tau(m)\le k \nonumber\\ & \hspace{60pt}   \text{ and } \ m\in N(k-\tau(m))\} \label{n3sets1}\\
    N_{3}^{(2)}(k;x) \ &:= \ N_{3}(k;x) \setminus N_{3}^{(1)}(k;x) \label{n3sets}\\
    M(k) \ &:= \ \left\{  n\in N(k): n=pm, \ p>P^{+}(m), \ D_{n}^{(1)} \subsetneq \{d\mid m \}  \right\}  \\
    M(k;x) \ &:= \ M(k)\cap [1,x]
\end{align}
\end{defi}

We carry out the above partition into $N_{1}, N_{2}, N_{3}^{(1)}, N_{3}^{(2)}$ recursively in Section \ref{proofbeat}. At each step, we show that the contributions from $N_{1}, N_{2}, N_{3}^{(2)}$ are of acceptable sizes, \footnote{ We say that the size of a quantity is \textit{acceptable} if it is not greater than that of the main term, e.g.,  $\frac{x}{\log x}(\log\log x)^{\left\lfloor\frac{\log(k+4)}{\log 2}\right\rfloor-3}$ in  Theorem \ref{beat} and \ $x/\log x$ in Theorem \ref{small}.} whereas the description for $N_{3}^{(1)}$ allows us to move onto the next step in the recursive process.  After this is done, we only have to apply Lemma \ref{helpful4}, i.e., the determination of $k$-near-perfect numbers for small integers $k$ with a fixed number of positive divisors.  In this way, we improve upon the bound $\frac{x}{\log x}(\log \log x)^{\left\lfloor\frac{\log k}{\log 2}\right\rfloor}$ and establish Theorem \ref{beat}. For more detail, see Section \ref{proofbeatout}.  As a by-product, we are able to deduce the precise asymptotic formulae for $4\le k\le 11$ in Theorem \ref{small}.

The proof of Theorem \ref{small} is simpler than that of Theorem \ref{beat}. It follows quite directly from the partition as in Theorem \ref{beat} without encountering complications of the recursive process.  We shall start with its proof  and briefly recall the essential estimates done in \cite{PoSh}  in the next section (Section \ref{proofsmall}).

Finally, we would also like to make a remark on the choice of the parameter $y$.  In \cite{PoSh}, they chose $y=x^{\frac{1}{4\log\log x}}$ for their applications. However, this is not admissible in the proof of Theorem \ref{beat}. Firstly, it is clear that the choice of \cite{PoSh} does not satisfy the conditions in Lemma \ref{jsmooth} ($j$  will be chosen in terms of $k$ in Section \ref{n31} and $j$ grows with $k$).  Secondly, in order to make sure the estimate in Lemma \ref{jsmooth} is of acceptable sizes with respect to Theorem \ref{beat}, i.e.,  smaller than $\frac{x}{\log x}(\log\log x)^{\lfloor\frac{\log(k+4)}{\log 2}\rfloor-3}$, it is essential to choose $y=(\log x)^{\alpha}$ for some $\alpha>0$. Thirdly, $\alpha$ needs to be large enough so that the contribution of $N_{3}^{(2)}$ is acceptable, see  Section \ref{n32}. We shall see $\alpha=3k+10$ is good enough. We shall stick with this choice of $y$ in Section \ref{proofsmall2} and \ref{proofbeat}. For Section \ref{rem23}, however,   we must choose a different $y$ there for better estimates.

%%%%%%%%%%%%%%%%%%%%%%%%%%%%%%%%%%%%%%%%%%%%%%%%%%%%%%%%%%%%%%%%%%%%%%%%%%%%%%%%%%%%%%%%%%%%%%%%%%%%%%%%%%%%%%%%%%%%%%%%%%%%%%%%%
%%%%%%%%%%%%%%%%%%%%%%%%%%%%%%%%%%%%%%%%%%%%%%%%%%%%%%%%%%%%%%%%%%%%%%%%%%%%%%%%%%%%%%%%%%%%%%%%%%%%%%%%%%%%%%%%%%%%%%%%%%%%%%%%%
%%%%%%%%%%%%%%%%%%%%%%%%%%%%%%%%%%%%%%%%%%%%%%%%%%%%%%%%%%%%%%%%%%%%%%%%%%%%%%%%%%%%%%%%%%%%%%%%%%%%%%%%%%%%%%%%%%%%%%%%%%%%%%%%%

\section{Proof of Theorem \ref{small}}\label{proofsmall}

\subsection{}\label{proofsmall1}

  We shall review the argument of \cite{PoSh} in this subsection for the convenience of readers.

 The estimations for $N_{1}(k;x)$ and $N_{2}(k;x)$ are straight-forward. Indeed,
\begin{equation}\label{eqn n1est}
\#N_{1}(k;x) \ \le \ \#\Phi(x,y)
\end{equation}
and
\begin{equation}\label{eqn n2est}
\#N_{2}(k;x) \ \le \  \#\left\{n\le x: P^{+}(n)>y, \ P^{+}(n)^2\mid n   \right\}    \ \le \ \sum_{p>y} \frac{x}{p^2} \ \ll \  \frac{x}{y}.
\end{equation}

Suppose $n=pm\in N_{3}^{(2)}(k;x)$. For the counting argument below, we shall also assume that
\begin{equation}\label{eqn tauassum}
\tau(m) \ \le \  (\log x)^{3}.
\end{equation}
This is acceptable because
\begin{equation}\label{eqn tausmall}
\#\{n\le x: \tau(n)> 2(\log x)^3\}\ll \frac{x}{(\log x)^2}.
\end{equation}
This follows from $2\tau(m)=\tau(n)$ and the crude estimate
\begin{equation*}
2(\log x)^3 \cdot \#\{n\le x: \tau(n)> 2(\log x)^3\} \ \le \ \sum_{n\le x} \tau(n) \ \ll \ x\log x.
\end{equation*}

In the following,
we count the number of possible $p$'s such that $pm\in N_{3}^{(2)}(k;x)$ for each $m\le x/y$ .  Since $n=pm$ is $k$-near-perfect,
\begin{align}
(1+p)\sigma(m)   \ = \  2pm + \sum_{d\in D_{n}^{(1)}}d+p\sum_{d\in D_{n}^{(2)}}d,
\end{align}
where the sets $D_{n}^{(1)}$ and $D_{n}^{(2)}$ are defined in Definition \ref{asso}. Reducing both sides  (mod $p$) yields
\begin{equation}\label{eqn modp}
p \ \ \bigg\lvert \left( \sigma(m)-\sum_{d\in D_{n}^{(1)}}d \right).
\end{equation}

By (\ref{n3sets}) and Proposition \ref{mnovel},
\begin{equation*}
  \sigma(m)-\sum_{d\in D_{n}^{(1)}}d \ > \ 0.
\end{equation*}
Moreover,
\begin{align*}
  \sigma(m)-\sum_{d\in D_{n}^{(1)}}d  \ \le \ \sigma(m) \ll \ m\log\log m \ \le \ x\log\log x.
\end{align*}
Thus, the number of prime factors of  $\left(\sigma(m)-\sum\limits_{d\in D_{n}^{(1)}}d \right)$ is
\begin{equation}\label{eqn primediv}
  O(\log x).
\end{equation}

Since $D_{n}^{(1)}\subset \{ d\mid m \}$ and  $\#D_{n}^{(1)}\le k$,   the number of possible values for $\left( \sigma(m)-\sum\limits_{d\in D_{n}^{(1)}}d \right)$ is
\begin{equation}\label{eqn roughct}
\ \le \ (1+\tau(m))^{k} \ \ll_{k} \  (\log x)^{3k}
\end{equation}
by \eqref{eqn tauassum}.

As a result, from \eqref{eqn modp},  \eqref{eqn primediv} and \eqref{eqn roughct}, the number of possible $p$'s is
\begin{equation}
\ \ll_{k} \ (\log x)^{3k+1}.
\end{equation}
From this, we conclude that
\begin{equation}\label{eqn n32est}
\#N_{3}^{(2)}(k;x) \ \ll_{k} \ \frac{x}{y}(\log x)^{3k+1}+ \frac{x}{(\log x)^2}.
\end{equation}

\subsection{}\label{proofsmall2}

Throughout this subsection,  we take $y:=(\log x)^{3k+10}$.

By \eqref{eqn smoothrough} of Lemma \ref{psmooth}, there exists a constant $x_{1}(k)>0$ such that for any $x\ge x_{1}(k)$,
\begin{equation}
\#N_{1}(k;x) \ \ll \ x\exp\left(-\frac{1}{2(3k+10)} \frac{\log x}{\log\log x}\right) \ \le \ \frac{x}{(\log x)^2}.
\end{equation}
Immediately from \eqref{eqn n2est} and \eqref{eqn n32est},
\begin{equation}
\#N_{2}(k;x), \#N_{3}^{(2)}(k;x) \ \ll_{k} \ \frac{x}{(\log x)^2}.
\end{equation}
Thus,  the contributions from $N_{1}(k;x)$, $N_{2}(k;x)$ and $N_{3}^{(2)}(k;x)$ are acceptable.

It remains to consider $n\in N_{3}^{(1)}(k;x)$, i.e.,  $n=pm\le x$, $p>\max\{y, P^{+}(m)\}$ and $m\in N(k-\tau(m))$. When $4\le k\le 11$, there are only finitely many such $m$ and they have been completely determined in Lemma \ref{helpful4}. Thus, by the Prime Number Theorem, we have
\begin{equation*}
\# N_{3}^{(1)}(k;x) \ \sim \ \sum_{r=4}^{k} \sum_{\substack{\tau(m)=r\\ m\in N(k-r)}}  \pi(x/m) \ \sim \  \left(\sum_{r=4}^{k} \sum_{\substack{\tau(m)=r\\ m\in N(k-r)}} \frac{1}{m}\right) \frac{x}{\log x}
\end{equation*}
as $x\to\infty$. Explicitly, the constant
\begin{equation}
c_{k} \ := \ \sum_{r=4}^{k} \sum_{\substack{\tau(m)=r\\ m\in N(k-r)}} \frac{1}{m}
\end{equation}
is equal to
\begin{align*}
c_{4} \ = \ c_{5} \ = \ \frac{1}{6}, \ c_{6} \ = \ \frac{17}{84}  \ = \ \frac{1}{6}+\frac{1}{28}  \ \approx \ 0.2024,
\end{align*}
\begin{align*}
c_{7} \ = \ c_{8} \ = \ \frac{493}{1260}\   \ = \  \frac{1}{6}+\frac{1}{12}+\frac{1}{18}+\frac{1}{20}+\frac{1}{28} \ \approx \  \ 0.3913,
\end{align*}
\begin{align*}
c_{9} \ = \ &\frac{1}{6}+\frac{1}{12}+\frac{1}{18}+\frac{1}{20}+\frac{1}{28}+\frac{1}{24}+\frac{1}{40}+\frac{1}{56}+\frac{1}{88}+\frac{1}{104} \\
& \ = \ \frac{179017}{360360} \   \approx \  0.4968,  \\
c_{10}  \ = \   &\frac{1}{6}+\frac{1}{12}+\frac{1}{18}+\frac{1}{20}+\frac{1}{28}+\frac{1}{24}+\frac{1}{40}+\frac{1}{56}+\frac{1}{88}+\frac{1}{104}\\
&+ \frac{1}{30}+\frac{1}{54}+\frac{1}{66}+ \frac{1}{196}+\frac{1}{496} \\
\ = \ & \frac{267857123}{469188720} \  \approx \ 0.5709,\\
c_{11}  \ = \ &\frac{1}{6}+\frac{1}{12}+\frac{1}{18}+\frac{1}{20}+\frac{1}{28}+\frac{1}{24}+\frac{1}{40}+\frac{1}{56}+\frac{1}{88}+\frac{1}{104}\\
&+ \frac{1}{30}+\frac{1}{54}+\frac{1}{66}+ \frac{1}{196}+\frac{1}{496}+\frac{1}{42}+\frac{1}{36}+\frac{1}{368}+\frac{1}{464} \\
\ = \ & \frac{196329752441}{312948876240} \  \approx \ 0.6274.
\end{align*}

This completes the proof of Theorem \ref{small}.

\subsection{}\label{rem23}

Before we end this section,  we would like to follow-up on a remark of \cite{PoSh} (pp. 3044) where they claimed the result
	\begin{equation}\label{eqn poshest}
	\#N(k;x)\ll x\exp(-(c_{k}+o(1))\sqrt{\log x \log\log x}),
	\end{equation}
	 for $k=2,3$, where $c_{2}=\sqrt{6}/6\approx0.4082$ and $c_{3}=\sqrt{2}/4\approx0.3535$. In view of the discussion in Section \ref{discuss}, the reason for a much smaller estimates for $k=2,3$ lies in the the nonexistence of near perfect numbers of the form $p$ or $p^2$ where $p$ is a prime (see Lemma \ref{helpful1}). This implies $N_{3}^{(1)}(3;x)$ (see \ref{n3sets1}) is an empty set .
	
	 Since a complete argument for \eqref{eqn poshest} was not given in \cite{PoSh}, we supply more detail here and hope it will be helpful for the interested readers. The argument below actually shows that one can take $c_{2}$ and $c_{3}$ to be $1/\sqrt{2}\approx 0.7071$, but this improvement is not substantial.
	
	 Here, it is essential to apply a more precise count of $\#\Phi(x,y)$ than the one given in Lemma \ref{psmooth}. From Theorem 9.15 and Corollary 9.18 of  \cite{DeKLu}, we have
	 \begin{equation}\label{eqn presmcount}
	 \#\Phi(x,y) \ = \ x \exp \left( -u\log u+O(u\log\log u) \right)
	 \end{equation}
	 and this is uniform for
	 \begin{equation}\label{eqn smcond}
	 (\log x)^3 \ \le \  y \ \le \  x,
	 \end{equation}
	 where
	 \begin{equation}
	 u \ := \ \frac{\log x}{\log y}.
	 \end{equation}
	 In this subsection, we shall choose a different $y=y(x)$  from the one taken in the rest of this article (i.e., $y=(\log x)^{3k+10}$).
	
	 We modify the estimations for $N_{3}^{(2)}(k;x)$ sketched in Section \ref{proofsmall1} slightly:
	 \begin{align}
	 &\#N_{3}^{(2)}(k;x)\nonumber\\
	  \ \le \ &\sum_{m\le \frac{x}{y}} \#\left\{p\le \frac{x}{m}: p\  \bigg| \left(\sigma(m)-\sum_{d\in D_{pm}^{^{(1)}}} d \right) \ \text{ and } \  \sigma(m)-\sum_{d\in D_{pm}^{^{(1)}}} d >0\right\}	 \nonumber\\
	  \ \ll_{k}& \ \sum_{m\le \frac{x}{y}} (\log x)\tau(m)^{k} \nonumber\\
	  \ \ll \ &\frac{x}{y}(\log x)^{2^{k}} \label{n32est2}.
	 \end{align}
	For a proof of the last estimate, see \cite{MV} eq. (2.31), pp. 61. Also, compare this with \eqref{eqn n32est}. We shall see shortly it  is a better estimate in the case of $k=2,3$ with a new choice of $y$.
	
	 Therefore for $k=2,3$, by  \eqref{eqn presmcount}, \eqref{eqn n1est}, \eqref{eqn n2est},  (\ref{n32est2}) and the fact that $N_{3}^{(2)}(k;x)=\emptyset$, we have
	 \begin{align}
	 \#N(k;x) \ &= \  \#N_{1}(k;x)+  \#N_{2}(k;x)+  \#N_{3}^{(2)}(k;x) \nonumber \\
	              \ &\ll_{k} \ x \exp \left( -u\log u+O(u\log\log u) \right)+ \frac{x}{y}(\log x)^{2^{k}} \label{23est}.
	 \end{align}
	 We optimize the last estimate by setting
	 \begin{equation}\label{eqn optu}
	 u\log u \ = \ \log y -2^{k} \log\log x.
	 \end{equation}
	 A good approximation for $u$ satisfying \eqref{eqn optu} is
	 \begin{equation}
	 u^2 \ = \ 2\ \frac{\log x}{\log\log x},
	 \end{equation}
	 i.e.,
	 \begin{equation}
	 \log y \ = \ \frac{1}{\sqrt{2}} \sqrt{\log x \log\log x}.
	 \end{equation}
	 It is to see it satisfies the requirement \eqref{eqn smcond}. Plugging this into (\ref{23est}), we have
	 \begin{equation}
	 \#N(k;x) \ \ll \  x\exp\left(-\frac{1}{\sqrt{2}}\left\{1+O\left(\frac{\log\log\log x}{\log \log x}\right)\right\}\sqrt{\log x \log\log x}\right)
	 \end{equation}
	 for $k=2,3$.

%%%%%%%%%%%%%%%%%%%%%%%%%%%%%%%%%%%%%%%%%%%%%%%%%%%%%%%%%%%%%%%%%%%%%%%%%%%%%%%%%%%%%%%%%%%%%%%%%%%%%%%%%%%%%%%%%%%%%%%%%%%%%%%%%%%%%%%%%%%%%%%%%%%%%%%%%%%%%%%%%%%%%%%%%%%%%%%%%%%%%%%%%%%%%%%%%%%%%%%%%%%%%%%%%%%%%%%%%%%%%%%%%%%%%%%%%%%%%%%%%%%%%%%%%%

\section{Proof of Theorem \ref{beat}}\label{proofbeat}

\subsection{Outline}\label{proofbeatout}

Throughout this section, we fix $y:=(\log x)^{3k+10}$ and denote by $T_{r}(x)$  the set of natural numbers in $[1,x]$ of the form $p_{1}\cdots p_{r}m_{r}$ with $p_{1}>\cdots>p_{r}>\max\{y, P^{+}(m_{r})\}$.

The estimates required in Step $0$ are sketched in Section \ref{proofsmall1}:
\begin{equation}
    \#N_{1}(k;x), \#N_{2}(k;x),   \#N_{3}^{(2)}(k;x) \ \ll_{k} \  \frac{x}{(\log x)^2},
\end{equation}
which are of acceptable sizes. Therefore, by (\ref{n3sets1}), it now suffices to consider the set
\begin{align}\label{set1}
\left\{n\in T_{1}(x):   \tau(m_{1}) \le k,  \ m_{1}\in N\left(k-\tau(m_{1}); \frac{x}{y}\right)   \right\}.
\end{align}

In Step $1$, we estimate the size of the set \ref{set1} by repeating the partition to $m_{1}$, i.e., consider
\begin{equation}\label{eqn set1}
    \left\{n\in T_{1}(x):  \tau(m_{1})\le k, \ m_{1}\in R\left(k-\tau(m_{1}); \frac{x}{y}\right)\right\}
\end{equation}
for $R=N_{1}, N_{2}, N_{3}^{(1)}, N_{3}^{(2)}$.

\begin{itemize}
\item When $R=N_{1}, N_{2}, N_{3}^{(2)}$, the sets \eqref{eqn set1} will be shown to be of acceptable sizes $O(\frac{x}{\log x} \log\log x)$.

\item When $R=N_{3}^{(1)}$, recall from (\ref{n3sets1}) that the condition $m_{1}\in N_{3}^{(1)}(k-\tau(m_{1}); x/y)$ refers to

\begin{align}
 m_{1} \ = \ p_{2}m_{2} \ \le \  \frac{x}{y}, & \hspace{10pt} p_{2} \ > \ \max\left\{y, P^{+}(m_{2})  \right\}, \nonumber \\
 \tau(m_{2}) \le k-\tau(m_{1}), &  \hspace{10pt}    m_{2} \in N(k-\tau(m_{1}-\tau(m_{2}))).
\end{align}
Thus, the set \eqref{eqn set1} is indeed equal to
\begin{align}
    \bigg\{n\in T_{2}(x):  \tau(m_{2}) \le \frac{k}{3},  \ m_{2}\in N\left(k-3\tau(m_{2}); \frac{x}{y^2}\right)\bigg\}. \label{set2}
\end{align}
\end{itemize}

In Step $2$, we estimate the size of (\ref{set2}) by repeating the partition to $m_{2}$, so on and so forth. More generally at Step $j-1$,  we arrive at the tasks of showing the sizes of the sets
\begin{align}
\left\{ n\in T_{j-1}(x):  \tau(m_{j-1}) \le \frac{k}{2^{j-1}-1}, \ m_{j-1}\in R\left(k-(2^{j-1}-1)\tau(m_{j-1});\frac{x}{y^{j-1}}\right)\right\}
\end{align}
being  $O_{j}\left( \frac{x}{\log x} (\log\log x)^{j-1}\right)$ for $R=N_{1}, N_{2}, N_{3}^{(2)}$. This will be done in Sections \ref{n1}, \ref{n2} and \ref{n32}.

The recursion ends once we hit an \textbf{$k$-admissible} integer $j_{0}$.

\begin{defi}\label{admi}
An integer $j_{0}\ge 1$ is said  to be $k$-\textbf{admissible} if
\begin{equation}\label{eqn adm}
1\ \le \ \#\left\{m_{j_{0}}\in \N: \tau(m_{j_{0}}) \le \frac{k}{2^{j_{0}}-1}, \ m_{j_{0}}\in N(k-(2^{j_{0}}-1)\tau(m_{j_{0}})) \right\} \ < \ \infty.
\end{equation}
\end{defi}

\begin{rek}
	In view of Lemma \ref{helpful4} (\ref{pri})(\ref{near4}), \eqref{eqn adm} is equivalent to
	\begin{equation}
\#\left\{m_{j_{0}}\in \N: 4\le \tau(m_{j_{0}}) \le \frac{k}{2^{j_{0}}-1}, \ m_{j_{0}}\in N(k-(2^{j_{0}}-1)\tau(m_{j_{0}})) \right\} \ < \ \infty.
	\end{equation}
	\end{rek}

It follows from Lemma \ref{lan} that
\begin{align}\label{estn31}
&\hspace{15pt} \#\bigg\{n\in T_{j_{0}-1}(x):  \tau(m_{j_{0}-1}) \le \frac{k}{2^{j_{0}-1}-1}, \nonumber\\
& \hspace{90pt}m_{j_{0}-1}\in N_{3}^{(1)}\left(k-(2^{j_{0}-1}-1)\tau(m_{j_{0}-1}); \frac{x}{y^{j_{0}-1}}\right)\bigg\}\nonumber\\
\ &\le \ \# \left\{n\in T_{j_{0}}(x):  \tau(m_{j_{0}}) \le \frac{k}{2^{j_{0}}-1}, \ m_{j_{0}}\in N(k-(2^{j_{0}}-1)\tau(m_{j_{0}}))\right\}\nonumber\\
\ &\ll_{j_{0}} \  \frac{x}{\log x}(\log\log x)^{j_{0}-1}
\end{align}
for an $k$-admissible integer $j_{0}$. By showing that
\begin{equation}
j_{0} \ = \  j_{0}(k) \ := \  \left\lfloor \frac{\log(k+4)}{\log 2} \right\rfloor - 2
\end{equation}
is $k$-admissible in Section \ref{n31}, and together with the estimates (\ref{estn1}), (\ref{estn2}), (\ref{estn32}) and (\ref{estn31}),  we have the upper bound in Theorem \ref{beat}.

The lower bound simply follows from the observation
\begin{equation*}
6p_{1}\cdots p_{s} \ =  \ p_{1}\cdots p_{s}+2p_{1}\cdots p_{s}+3p_{1}\cdots p_{s},
\end{equation*}
where $p_{1}>\cdots >p_{s}>3$ are primes. Thus, $6p_{1}\cdots p_{s}$ is a $k_{s}$-near-perfect numbers with
\begin{equation*}
k_{s} \ := \ \tau(6p_{1}\cdots p_{s})-1-3 \ =  \ 2^{s+2}-4.
\end{equation*}

Fix any integer $k\ge 4$. Take the largest integer $s\ge 1$ such that $k_{s}\le k$, i.e.,
\begin{align*}
s \ = \ \left\lfloor \frac{\log(k+4)}{\log 2} \right\rfloor - 2.
\end{align*}
Then by Lemma \ref{lan}  one has
\begin{align}
\#N(k;x)  \ &\ge \ \#\{6p_{1}\cdots p_{s} \le x: p_{1}>\cdots >p_{s}>3   \} \nonumber\\
\ &\gg_{k} \ \frac{x}{\log x} (\log \log x)^{\left\lfloor \frac{\log(k+4)}{\log 2} \right\rfloor - 3}.
\end{align}

%%%%%%%%%%%%%%%%%%%%%%%%%%%%%%%%%%%%%%%%%
%%%%%%%%%%%%%%%%%%%%%%%%%%%%%%%%%%%%%%%%%
%%%%%%%%%%%%%%%%%%%%%%%%%%%%%%%%%%%%%%%%%
\subsection{Estimation for $R=N_{1}$}\label{n1}

In view of our claim that
\begin{equation*}
j_{0} \ = \ \left\lfloor \frac{\log(k+4)}{\log 2} \right\rfloor - 2
\end{equation*}
is an $k$-admissible integer, we apply Lemma \ref{jsmooth} for
\begin{equation*}
2 \ \le \  j  \ \le  \ j_{0}.
\end{equation*}
In order to meet the assumptions of Lemma \ref{jsmooth}, we restrict to $x\ge x_{0}(k)$, where $x_{0}(k)>0$ is a large constant such that for $x\ge x_{0}(k)$,
\begin{equation}
(\log x)^{3k+10} \ \le \ x^{\frac{1}{4j_{0} \log\log x}}.
\end{equation}
Hence for $x\ge x_{0}(k)$ and $2\le j\le j_{0}$, we have
\begin{align}
\#\bigg\{ n\in T_{j-1}(x):  & \ \tau(m_{j-1}) \le \frac{k}{2^{j-1}-1},  \nonumber\\
&m_{j-1}\in N_{1}\left(k-(2^{j-1}-1)\tau(m_{j-1});\frac{x}{y^{j-1}}\right)\bigg\}\nonumber
\end{align}
being bounded by
\begin{equation}\label{estn1}
\#\Phi_{j-1}(x,y) \  \ll_{j}  \  \frac{x}{\log x}(\log\log x)^{j-1} \ \ll_{k} \  \frac{x}{\log x}(\log\log x)^{j_{0}-1}.
\end{equation}

%%%%%%%%%%%%%%%%%%%%%%%%%%%%%%%%%%%%%%%%%
%%%%%%%%%%%%%%%%%%%%%%%%%%%%%%%%%%%%%%%%%
%%%%%%%%%%%%%%%%%%%%%%%%%%%%%%%%%%%%%%%%%
\subsection{Estimation for $R=N_{2}$}\label{n2}

 From our previous analysis, for $2\le j\le j_{0}$ we have
\begin{align}\label{estn2}
 &\#\left\{ n\in T_{j-1}(x): m_{j-1}\in N_{2}\left(k-(2^{j-1}-1)\tau(m_{j-1});\frac{x}{y^{j-1}}\right)\right\}\nonumber\\
 \ \le \ &\sum_{\substack{p_{1}>\cdots>p_{j-1}>y\\ p_{1}\cdots p_{j-1}\le x}}\sum_{\substack{m_{j-1}\le x/p_{1}\cdots p_{j-1}\\ P^{+}(m_{j-1})^2|m_{j-1}\\ P^{+}(m_{j-1})>y}} 1\nonumber\\
\ \le \ &\sum_{\substack{p_{1}>\cdots>p_{j-1}>y\\ p_{1}\cdots p_{j-1}\le x}} \frac{x}{y}\frac{1}{ p_{1}\cdots p_{j-1}}\nonumber\\
 \  \le \ &\frac{x}{y}\left(\sum_{p\le x} \frac{1}{p}\right)^{j-1}\ \ll\  \frac{x}{y}(\log\log x)^{j-1} \  \ll  \  \frac{x}{(\log x)^{3k+10}} (\log\log x)^{j_{0}-1}.
\end{align}

%%%%%%%%%%%%%%%%%%%%%%%%%%%%%%%%%%%%%%%%%
%%%%%%%%%%%%%%%%%%%%%%%%%%%%%%%%%%%%%%%%%
%%%%%%%%%%%%%%%%%%%%%%%%%%%%%%%%%%%%%%%%%
\subsection{Estimation of $R=N_{3}^{(2)}$}\label{n32}

Recall the notations $M(k)$ and $M(k;x)$ introduced in Section \ref{discuss}. From the argument sketched in Section \ref{proofsmall1}, we have
\begin{equation}
    \#\{n\le x: n\in M(k), P^{+}(n)>y\}\ \ll_{k}\ \frac{x}{y}(\log x)^{3k+1}.
\end{equation}
Then
\begin{align}
\# M(k;x) & \ = \ \#\{n\le x: n\in M(k), P^{+}(n)\le y\}+\#\{n\le x: n\in M(k), P^{+}(n)> y\}\nonumber\\
&\ \ll_{k}\ \#\Phi(x,y)+  \frac{x}{y}(\log x)^{3k+1}\ \ll_{k}\  \frac{x}{(\log x)^2}. \label{mkest}
\end{align}
It follows from partial summation that
\begin{equation}\label{eqn part}
\sum_{n\in M(k)}\frac{1}{n}\ < \ \infty.
\end{equation}

Therefore for $2\le j\le j_{0}$, by applying  (\ref{mkest}), \eqref{eqn part} and  Lemma \ref{lan}, we have
\begin{align}\label{estn32}
    & \hspace{10pt} \#\left\{n\in T_{j-1}(x): m_{j-1}\in N_{3}^{(2)}\left(k-(2^{j-1}-1)\tau(m_{j-1});\frac{x}{y^{j-1}}\right)\right\}\nonumber\\
    \ &\le \ \sum_{\substack{p_{1}>\cdots>p_{j-1}>y\\ p_{1}\cdots p_{j-1}\le \sqrt{x}}} \sum_{\substack{m_{j-1}\le x/p_{1}\cdots p_{j-1}\\ m_{j-1}\in M(k)}} 1+\sum_{\substack{m_{j-1}\le\sqrt{x}\\ m_{j-1}\in M(k)}}\sum_{\substack{p_{1}>\cdots>p_{j-1}>y\\ p_{1}\cdots p_{j-1}\le x/m_{j-1}}} 1\nonumber\\
    \ &\ll_{k} \ \sum_{\substack{p_{1}>\cdots>p_{j-1}>\\ p_{1}\cdots p_{j-1}\le \sqrt{x}}} \frac{\frac{x}{p_{1}\cdots p_{j-1}}}{(\log\frac{x}{p_{1}\cdots p_{j-1}})^2}+\sum_{\substack{m_{j-1}\le\sqrt{x}\\ m_{j-1}\in M(k)}} \frac{\frac{x}{m_{j-1}}}{\log \frac{x}{m_{j-1}}}\left(\log\log \frac{x}{m_{j-1}}\right)^{j-2}\nonumber\\
    \ &\ll  \ \frac{x}{(\log x)^2}\sum_{\substack{p_{1}>\cdots>p_{j-1}>y\\ p_{1}\cdots p_{j-1}\le \sqrt{x}}} \frac{1}{p_{1}\cdots p_{j-1}}+\frac{x}{\log x}(\log\log x)^{j-2}\sum_{\substack{m_{j-1}\le\sqrt{x}\\ m_{j-1}\in M(k)}} \frac{1}{m_{j-1}} \nonumber\\
    \ &\ll \  \frac{x}{(\log x)^2}\left(\sum_{p\le\sqrt{x}}\frac{1}{p}\right)^{j-1}+\frac{x}{\log x}(\log\log x)^{j-2}\nonumber\\
   \ & \ \ll  \ \frac{x}{\log x}(\log\log x)^{j_{0}-2}.
\end{align}

%%%%%%%%%%%%%%%%%%%%%%%%%%%%%%%%%%%%%%%%%
%%%%%%%%%%%%%%%%%%%%%%%%%%%%%%%%%%%%%%%%%
%%%%%%%%%%%%%%%%%%%%%%%%%%%%%%%%%%%%%%%%%
\subsection{Analyzing $R=N_{3}^{(1)}$}\label{n31}

We consider the following cases:

\begin{enumerate}
	\item \label{ca1}
	\begin{equation}\label{eqn range1}
	4\cdot (2^{s}-1) \ \le \  k \ < \  8\cdot ( 2^{s}-1)
	\end{equation}
	for some $s> 1$,

	\item \label{ca2}
	\begin{equation}\label{eqn disc}
	k \ = \ 8\cdot 2^{s}-\ell
	\end{equation}
	with either
	
	\begin{enumerate}
		\item $s\ge 3$ and $\ell \in \{5,6,7,8\}$; or
		
		\item $s=2$ and $\ell\in \{6,7,8\}$,
		
		\end{enumerate}

	\item \label{ca3}
	\begin{equation}
	k \ = \  27
	\end{equation}
	 (i.e., $s=2$ and $\ell=5$ in \eqref{eqn disc}).
	\end{enumerate}
	It is clear that the above covers all integers $k\ge 12$. \footnote{ Note that the cases $4\le k\le 11$ have been settled in Theorem \ref{small}. } In any case, we have
	\begin{equation}
	s \ = \  \left\lfloor\frac{\log\left(\frac{k}{4}+1\right)}{\log 2}\right\rfloor \ = \  \left\lfloor\frac{\log(k+4)}{\log 2}\right\rfloor-2.
	\end{equation}
	
	Indeed for  Case (\ref{ca1}),
	\begin{equation*}
	\frac{\log(\frac{k}{8}+1)}{\log 2} \ < \ s \ \le \ \frac{\log\left(\frac{k}{4}+1\right)}{\log 2}
	\end{equation*}
	and
	\begin{equation*}
	0 \ < \ \frac{\log\left(\frac{k}{4}+1\right)}{\log 2}- \frac{\log\left(\frac{k}{8}+1\right)}{\log 2} \ < \ 1.
	\end{equation*}
	Hence,
	\begin{equation}
	s \ = \  \left\lfloor\frac{\log\left(\frac{k}{4}+1\right)}{\log 2}\right\rfloor \ = \  \left\lfloor\frac{\log(k+4)}{\log 2}\right\rfloor-2.
	\end{equation}
	
	For Cases (\ref{ca2}) and (\ref{ca3}), we have
	\begin{align*}
	s+2 \ < \ \frac{\log (k+4)}{\log 2} \ < \ s+3,
	\end{align*}
	and
	\begin{align}
	s  \ =  \ \left\lfloor \frac{\log (k+4)}{\log 2} \right\rfloor -2.
	\end{align}

	Thus, the upper bound in Theorem \ref{beat} would follow if we establish the claim that $s$ is an $k$-admissible integer in each case.

\begin{enumerate}

\item Suppose
 \begin{equation}
 4\cdot (2^{s}-1) \ \le \  k \ < \  8\cdot ( 2^{s}-1)
\end{equation}
for some $s> 1$.   From Lemma \ref{helpful4}(\ref{near6})(\ref{near4}), the facts that
\begin{equation*}
4\ \le\ \tau(m_{s})\ \le\ \frac{k}{2^s-1} \ < \ 8
\end{equation*}
and  $m_{s}$ being near-perfect, we have  $m_{s}\in \{6,12,28,20,28  \}$. Thus,  $s$ is an $k$-admissible integer.
\newline

%%%%%%%%%%%%%%%%%%%%%%%%%%%%%%%%%%%%%%%%%%%%%%

\item Suppose
\begin{equation}
k \ = \ 8\cdot 2^{s}-\ell
\end{equation}
with either

\begin{enumerate}
	\item $s\ge 3$ and $\ell \in \{5,6,7,8\}$; or
	
	\item $s=2$ and $\ell\in \{6,7,8\}$.
	
\end{enumerate}
In both cases, we have
\begin{equation*}
4 \ \le \ \tau(m_{s}) \ \le \ \frac{k}{2^s-1} \ < \ 9.
\end{equation*}

If $4\le \tau(m_{s})< 8$, then $m_{s}\in \{6,12,28,20,28 \}$ as in the previous case. Now suppose $\tau(m_{s})=8$. Then
\begin{align*}
m_{s} \ &\in \ N(k-(2^{s}-1)\tau(m_{s})) \\
 \ &=  \ N((2^s-1)(8-\tau(m_{s}))+(8-\ell)) \\
 \ &= \ N(8-\ell) \ \subset \ N(3).
\end{align*}
By Lemma \ref{helpful4} (\ref{3near8}), $m_{s}\in \{24, 40, 56, 88, 104, 30, 54, 66, 42\}$. As a result, $s$ is an $k$-admissible integer.
\newline

%%%%%%%%%%%%%%%%%%%%%%%%%%%%%%%%%%%%%%%%%%%%%%%%%%%%%%%%%%%%%%%%%%%%%%%%%%%%%%%%%%%

\item Suppose $k=27$. Then
\begin{align*}
4 \ \le \ \tau(m_{2}) \ \le  \ 9.
\end{align*}
If $4\le \tau(m_{2})\le 8$, then $m_{2}\in \{6,12,28,20,28 \} \cup \{24, 40, 56, 88, 104, 30, 54, 66, 42\}$ as in the previous cases. When $\tau(m_{2})=9$, $m_{2} \in   N(0)$
and there is no such $m_{2}$ by Lemma \ref{helpful4} (\ref{2near9}). Hence,  $2$ is $27$-admissible.
\end{enumerate}

This completes the proof of
\begin{align}
N(k;x) \ \ll_{k} \ \frac{x}{\log x} (\log \log x)^{\left\lfloor \frac{\log (k+4)}{\log 2} \right\rfloor-3}
\end{align}
for any $k\ge 4$ and hence the proof of Theorem \ref{beat}.

%%%%%%%%%%%%%%%%%%%%%%%%%%%%%%%%%%%%%%%%%%%%%%%%%%%%%%%%%%%%%%%%%%%%%%%%%%%%%%%%%%%%%%%%%%%%%%%%%%%%%%%%%%%%%%%%%%%%%%%%%%%%%%%%%
%%%%%%%%%%%%%%%%%%%%%%%%%%%%%%%%%%%%%%%%%%%%%%%%%%%%%%%%%%%%%%%%%%%%%%%%%%%%%%%%%%%%%%%%%%%%%%%%%%%%%%%%%%%%%%%%%%%%%%%%%%%%%%%%%
%%%%%%%%%%%%%%%%%%%%%%%%%%%%%%%%%%%%%%%%%%%%%%%%%%%%%%%%%%%%%%%%%%%%%%%%%%%%%%%%%%%%%%%%%%%%%%%%%%%%%%%%%%%%%%%%%%%%%%%%%%%%%%%%%%%%%%%%%%%%%%%%%%%%%%%%%%%%%%%%%%%%%%%%%%%%%%%%%%%%%%%%%%%%%%%%%%%%%%%%%%%%%%%%%%%%%%%%%%%%%%%%%%%%%%%%%%%%%%%%%%%%%%%%%%%%%%%%%%
%%%%%%%%%%%%%%%%%%%%%%%%%%%%%%%%%%%%%%%%%%%%%%%%%%%%%%%%%%%%%%%%%%%%%%%%%%%%%%%%%%%%%%%%%%%%%%%%%%%%%%%%%%%%%%%%%%%%%%%%%%%%%%%%%
%%%%%%%%%%%%%%%%%%%%%%%%%%%%%%%%%%%%%%%%%%%%%%%%%%%%%%%%%%%%%%%%%%%%%%%%%%%%%%%%%%%%%%%%%%%%%%%%%%%%%%%%%%%%%%%%%%%%%%%%%%%%%%%%%

\section{Proof of Theorem \ref{thm: Intersections}}\label{proofint}

Let $\epsilon \in (0, 2/5)$. By Lemma \ref{spor},
\begin{align}
\#(E(k;x)\setminus E_{\epsilon}(k;x))\ &\le \ \#\{n\le x: n\in E(k), n=pm', p\nmid m', \sigma(m')=2m'\}\nonumber\\
& \hspace{15pt} +O(x^{3/5+\epsilon+o(1)}).
\end{align}
For $n\in E(k)$ with $n=pm'$, $p\nmid m'$ and $\sigma(m')=2m'$, we have
\begin{equation}\label{eqn sum}
pm'\ = \ \sum_{d_{1}\in D_{1}} d_{1} +p \sum_{d_{2}\in D_{2}} d_{2},
\end{equation}
where $D_{1}$ is a subset of positive divisors of $m'$, $D_{2}$ is a subset of proper divisors of $m'$ with $\# D_{1}+\# D_{2}=\tau(pm')-1-k=2\tau(m')-1-k$.\\

Suppose that $D_{1}\neq \emptyset$. Then
\begin{equation}
1\ \le\ \sum_{d_{1}\in D_{1}} d_{1}\ \le\ \sigma(m')\ = \ 2m'.
\end{equation}

Reducing \eqref{eqn sum} modulo $p$, we have
\begin{equation}
p \ \ \bigg| \ \sum_{d_{1}\in D_{1}} d_{1}.
\end{equation}

The number of possible values for $p$ is $O(\log 2m')=O(\log x)$. Thus the number of possible values for such $n$ is $O(x^{o(1)}\log x)$ by the Hornfeck-Wirsing Theorem (\cite{HoWi}), which is acceptable.

Now suppose that $D_{1}=\emptyset$. Then $\#D_{2}=2\tau(m')-1-k$ and
\begin{equation}
m'\ = \ \sum_{d_{2}\in D_{2}} d_{2}.
\end{equation}
Since $\sigma(m')=2m'$, we have $\#D_{2}=\tau(m')-1$. Therefore, $\tau(m')-1=2\tau(m')-1-k$, i.e., $\tau(m')=k$.

By the hypothesis of non-existence of odd perfect number and the Euclid-Euler Theorem, we have $m'=2^{q'-1}(2^{q'}-1)$ for some Mersenne prime $q'$. So $k=\tau(m')=2q'\in M$. Hence if $k\not\in M$, then we have a contradiction and
\begin{equation}
\#(E(k;x)\setminus E_{\epsilon}(k;x)) \ = \ O(x^{o(1)}\log x)+O(x^{3/5+\epsilon+o(1)}) \ = \ O(x^{3/5+\epsilon+o(1)}).
\end{equation}

It was shown in \cite{PoSh}, by using a form of  the Prime Number Theorem of Drmota, Mauduit and Rivat,  that for all large $k$ the number of $k$-exactly-perfect numbers up to $x$ is $\gg_{k} x/\log x$. Therefore
\begin{equation}
\frac{\#(E(k;x)\setminus E_{\epsilon}(k;x))}{\#E(k;x)}\ \ll_{k}\ \frac{\log x}{x^{2/5-\epsilon-o(1)}}
\end{equation}
and
\begin{equation}\label{eqn limit}
\lim_{x\to\infty} \frac{\#E_{\epsilon}(k;x)}{\#E(k;x)} \ = \ 1.
\end{equation}

\begin{rek}
Suppose $k\in M$. Then $k=2q$ for some Mersenne prime $q$. Let $m=2^{q-1}(2^{q}-1)$. Then $q'=q$ and so $m'=m$ in the above argument. By the Prime Number Theorem,
\begin{equation}
\limsup_{x\to\infty} \frac{\#(E(k;x)\setminus E_{\epsilon}(k;x))}{x/\log x}\ \le \ \frac{1}{m}.
\end{equation}

On the other hand, since $m$ is perfect, the number of proper divisors of $m$ is $\tau(m)-1=2q-1$. Hence $pm$ is a sum of $2q-1$ of its proper divisors. The number of proper divisors of $pm$ is $\tau(pm)-1=4q-1$. So, $pm$ is a sum of all of its proper divisors with exactly $(4q-1)-(2q-1)=2q$ exceptions, i.e., $pm\in E(k)$. Clearly $\sigma(pm)-2pm<(pm)^{\epsilon}$ if $p>(2m^{1-\epsilon})^{1/\epsilon}$ and $p\nmid m$. It follows that
\begin{equation}
\liminf_{x\to\infty}\frac{\#(E(k;x)\setminus E_{\epsilon}(k;x))}{x/\log x} \ \ge\ \frac{1}{m}.
\end{equation}

\noindent As a result,
\begin{equation}
\lim_{x\to\infty} \frac{\#(E(k;x)\setminus E_{\epsilon}(k;x))}{x/\log x}\ = \ \frac{1}{m}.
\end{equation}
\end{rek}

%%%%%%%%%%%%%%%%%%%%%%%%%%%%%%%%%%%%%%%%%%%%%%%%%%%%%%%%%%%%%%%%%%%%%%%%%%%%%%%%%%%%%%%%%%%%%%%%%%%%%%%%%%%%%%%%%%%%%%%%%%%%%%%%%
%%%%%%%%%%%%%%%%%%%%%%%%%%%%%%%%%%%%%%%%%%%%%%%%%%%%%%%%%%%%%%%%%%%%%%%%%%%%%%%%%%%%%%%%%%%%%%%%%%%%%%%%%%%%%%%%%%%%%%%%%%%%%%%%%
%%%%%%%%%%%%%%%%%%%%%%%%%%%%%%%%%%%%%%%%%%%%%%%%%%%%%%%%%%%%%%%%%%%%%%%%%%%%%%%%%%%%%%%%%%%%%%%%%%%%%%%%%%%%%%%%%%%%%%%%%%%%%%%%%

%%%%%%%%%%%%%%%%%%%%%%%%%%%%%%%%%%%%%%%%%%%%%%%%%%%%%%%%%%%%%%%%%%%%%%%%%%%%%%%%%%%%%%%%%%%%%%%%%%%%%%%%%%%%%%%%%%%%%%%%%%%%%%%%%%%%%%%%%%%%%%%%%%%%%%%%%%%%%%%%%%%%%%%%%%%%%%%%%%%%%%%%%%%%%%%%%%%%%%%%%%%%%%%%%%%

%%%%%%%%%%%%%%%%%%%%%%%%%%%%%%%%%%%%%%%%%%%%%%%%%%%%%%%%%%%%%%%%%%%%%%%%%%%%%%%%%%%%%%%%%%%%%%%%%%%%%%%%%%%%%%%%%%%%%%%%%%%%%%%%%
%%%%%%%%%%%%%%%%%%%%%%%%%%%%%%%%%%%%%%%%%%%%%%%%%%%%%%%%%%%%%%%%%%%%%%%%%%%%%%%%%%%%%%%%%%%%%%%%%%%%%%%%%%%%%%%%%%%%%%%%%%%%%%%%%
%%%%%%%%%%%%%%%%%%%%%%%%%%%%%%%%%%%%%%%%%%%%%%%%%%%%%%%%%%%%%%%%%%%%%%%%%%%%%%%%%%%%%%%%%%%%%%%%%%%%%%%%%%%%%%%%%%%%%%%%%%%%%%%%%

%%%%%%%%%%%%%%%%%%%%%%%%%%%%%%%%%%%%%%%%%%%%%%%%%%%%%%%%%%%%%%%%%%%%%%%%%%%%%%%%%%%%%%%%%%%%%%%%%%%%%%%%%%%%%%%%%%%%%%%%%%%%%%%%%
%%%%%%%%%%%%%%%%%%%%%%%%%%%%%%%%%%%%%%%%%%%%%%%%%%%%%%%%%%%%%%%%%%%%%%%%%%%%%%%%%%%%%%%%%%%%%%%%%%%%%%%%%%%%%%%%%%%%%%%%%%%%%%%%%
%%%%%%%%%%%%%%%%%%%%%%%%%%%%%%%%%%%%%%%%%%%%%%%%%%%%%%%%%%%%%%%%%%%%%%%%%%%%%%%%%%%%%%%%%%%%%%%%%%%%%%%%%%%%%%%%%%%%%%%%%%%%%%%%%

\section{Acknowledgments}

This work was supported in part by NSF Grants DMS1265673, DMS1347804, DMS1561945, and DMS1659037, the Williams SMALL REU Program, the Clare Boothe Luce Program, the COSINE Program of the Chinese University of Hong Kong and the Professor Charles K. Kao Research Exchange Scholarship 2015/16. We thank Kevin Ford, Charles Chun Che Li, Paul Pollack and Carl Pomerance for helpful discussions. We also thank the anonymous referee for his/her careful reading and valuable comments.

%%%%%%%%%%%%%%%%%%%%%%%%%%%%%%%%%%%%%%%%%%%%%%%%%%%%%%%%%%%%%%%%%%%%%%%%%%%%%%%%%%%%%%%%%%%%%%%%%%%%%%%%%%%%%%%%%%%%%%%%%%%%%%%%%%%%%%%%%%%%%%%%%%%%%%%%%%%%%%%%%%%%%%%%%%%%%%%%%%%%%%%%%%%%%%%%%%%%%%%%%%%%%%%%%%%%%%%%%%%%%

%\newpage

\bigskip

\end{document}